\newtheorem{theorem}{Theorem}[section]
\newtheorem{lemma}[theorem]{Lemma}
\newtheorem{corollary}[theorem]{Corollary}
\newtheorem{definition}[theorem]{Definition}
\newtheorem{proposition}[theorem]{Proposition}
\newtheorem{thm}{Theorem}
 \def\cD{{\mathcal D}}    
\def\cH{{\mathcal H}}
  \def\bbZ{{\mathbb Z}}  \def\bbQ{{\mathbb Q}}
    \def\bbF{{\mathbb F}}
\def\bbC{{\mathbb C}}
         \def\bfU{{\bf U}}
  \def\leq{\leqslant}
\def\Hom{\mbox{\rm Hom}}
\def\dim{\mbox{\rm dim}}   \def\End{\mbox{\rm End}}
\def\Aut{\mbox{\rm Aut}}
\def\mod{\mbox{\rm mod}}  \def\tr{\mbox{\rm tr}}
\def\rep{\mbox{\rm Rep}\,}
\def\bfV{{\mathbf V}}
\def\bfH{{\mathbf H}} \def\bfP{{\mathbf P}}
\def\supp{{\rm supp}}
\def\rep{{\rm rep}}
\def\bfV{\mathbf{V}}
\def\bfW{\mathbf{W}}
\def\bfE{\mathbf{E}}
\def\bfG{\mathbf{G}}
\def\bfU{\mathbf{U}}
\def\Ind{\mathbf{Ind}}
\newcommand\bbz{{\mathbb Z}}
\newcommand\bbq{{\mathbb Q}}
\newcommand\mk{{\mathcal{K}}}
\newcommand\mm{{\mathcal{M}}}
\newcommand\mf{{\mathcal{F}}}
\newcommand\mo{{\mathcal{O}}}
\newcommand\mP{{\mathcal{P}}}
\begin{document}

\title[]{Structure coefficients for quantum groups}

\author{Yixin Lan, Yumeng Wu, Jie Xiao}
\address{Max Plank institute for mathematics}
\email{lanyixin@amss.ac.cn (Y. Lan)}
\address{Beijing International Center for Mathematical Research, Beijing 100871, P. R. China}
\email{2506397175@pku.edu.cn (Y. Wu)}
\address{School of mathematical seciences, Beijing Normal University, Beijing 100875, P. R. China}
\email{jxiao@bnu.edu.cn(J.Xiao)}

\subjclass[2000]{16G20, 17B37}

\keywords{}

\bibliographystyle{abbrv}
\begin{abstract}
According to the Hall algebras of quivers with automorphisms under Lusztig's construction, the polynominal forms of several structure coefficients for quantum groups of all finite types are presented in this note. We first provide a geometric realization of the coefficients between PBW basis and the canonical basis via standard sheaves on quiver moduli spaces with admissible automorphisms. This realization is constructed through Lusztig sheaves equipped with periodic functors and their modified Grothendieck groups. Second, within this geometric framework, we present an alternative proof for  the existence of Hall polynomials originally due to Ringel. Finally, we give a slight generalization of the Reineke-Caldero expression for the bar involution of PBW basis elements in symmetrizable cases. When the periodic functor $\mathbf{a}^*$ is taken $\operatorname{id}$, our results are the same as Lusztig's and Caldero-Reineke's.

\end{abstract}
\maketitle
\setcounter{tocdepth}{1}\tableofcontents
\begin{spacing}{1.5}
\section{Category with the periodic functor $\mathbf{a}^*$}
\subsection{Lusztig's Sheaves for Quivers with Automorphisms}

Most of the notations and properties in this section are taken from \cite[Chapters 11, 12, and 14]{lusztig2010introduction}. Lusztig considers a finite quiver $Q = (I, H, \Omega)$ with an admissible automorphism $a$ for a given Cartan matrix, in which a finite quiver $Q = (I, H, \Omega)$ consists of finite sets $I$, $H$, and $\Omega$, where $I$ is the set of vertices, $H$ is the set of all oriented arrows $s(h) \xrightarrow{h} t(h)$, and $\Omega \subset H$ is a subset such that $H$ is the disjoint union of $\Omega$ and $\bar{\Omega}$. Here, $\bar{}: H \to H$ is the map taking the opposite orientation. We call such $\Omega$ an orientation of $Q$.

For a finite quiver $(I, H, \Omega)$ without loops, an admissible automorphism $a$ consists of two permutations: $a: I \to I$ and $a: H \to H$, such that
\begin{itemize}
    \item[(a)] $a(s(h)) = s(a(h))$ and $a(t(h)) = t(a(h))$ for any $h \in H$;
    \item[(b)] $s(h)$ and $t(h) \in I$ belong to different $a$-orbits for any $h \in H$.
\end{itemize}

Given a symmetrizable generalized Cartan matrix $C = (c_{ij})_{i,j \in I'} = DB$, there is (not necessarily unique) a finite quiver $Q = (I, H, \Omega)$ with an admissible automorphism $a$ such that the $a$-orbits of $I$ are bijective to $I'$, the order of an $a$-orbit corresponding to $i \in I'$ equals $s_i$, and the number of arrows between $a$-orbits corresponding to $i, j \in I'$ equals $c_{ij}s_i = c_{ji}s_j$.

In this subsection, we introduce the category of Lusztig's sheaves for quivers. Let $\mathbf{k}$ be an algebraically closed field with $char(\mathbf{k}) = p > 0$, $q=p^r$, $r\in \mathbb{N}_{>0}$. Given a quiver $Q = (I, H, \Omega)$ and an $I$-graded $\mathbf{k}$-space $\mathbf{V} = \bigoplus_{i \in I} \mathbf{V}_i$ of dimension vector $\nu$, the affine variety $\mathbf{E}_{\mathbf{V}, \Omega}$ is defined by
$$
\bfE_{\nu, \Omega}=\mathbf{E}_{\mathbf{V}, \Omega} = \bigoplus_{h \in \Omega} \Hom(\mathbf{V}_{s(h)}, \mathbf{V}_{t(h)}).
$$

When the orientation is fixed, we could just denote it $\bfE_{\nu}$ or $\bfE_{\bfV}$.

The algebraic group $G_{\nu}=G_{\mathbf{V}} = \prod_{i \in I} GL(\mathbf{V}_i)$ acts on $\mathbf{E}_{\mathbf{V}, \Omega}$ by composition naturally. We denote the $G_{\mathbf{V}}$-equivariant derived category of constructible $\overline{\mathbb{Q}}_l$-sheaves on $\mathbf{E}_{\mathbf{V}, \Omega}$ by $\mathcal{D}^b_{G_{\mathbf{V}}}(\mathbf{E}_{\mathbf{V}, \Omega})$. We denote $[n]$ as the $n$-times shift functor, $\mathbf{D}$ as Verdier duality and $(n)$ as Tate twist.

Let $\mathcal{S}$ be the set of finite sequences $\boldsymbol{\nu} = (\nu^1, \nu^2, \dots, \nu^s)$ of dimension vectors such that each $\nu^l = a_l i_l$ for some $a_l \in \mathbb{N}_{\geqslant 1}$ and $i_l \in I$. If $\sum_{1 \leqslant l \leqslant s} \nu^l = \nu$, we say $\boldsymbol{\nu}$ is a flag type of $\nu$ or $\mathbf{V}$.

For a flag type $\boldsymbol{\nu}$ of $\mathbf{V}$, the flag variety $\mathcal{F}_{\boldsymbol{\nu}, \Omega}$ is the smooth variety consisting of pairs $(x, f)$, where $x \in \mathbf{E}_{\mathbf{V}, \Omega}$ and $f = (\mathbf{V} = \mathbf{V}^s \subseteq \mathbf{V}^{s-1} \subseteq \dots \subseteq \mathbf{V}^0 = 0)$ is a filtration of the $I$-graded space such that $x(\mathbf{V}^l) \subseteq \mathbf{V}^l$ and the dimension vector of $\mathbf{V}^{l-1} / \mathbf{V}^l = \nu^l$ for any $l$. There is a proper map $\pi_{\boldsymbol{\nu}}: \mathcal{F}_{\boldsymbol{\nu}, \Omega} \to \mathbf{E}_{\mathbf{V}, \Omega}$, given by $(x, f) \mapsto x$. Hence, by the decomposition theorem in \cite{BBD}, the complex
$$
L_{\boldsymbol{\nu}} = (\pi_{\boldsymbol{\nu}, \Omega})_! \overline{\mathbb{Q}}_l[\dim \mathcal{F}_{\boldsymbol{\nu}, \Omega}](\frac{\dim \mathcal{F}_{\boldsymbol{\nu}, \Omega}}{2})
$$
is a semisimple complex on $\mathbf{E}_{\mathbf{V}, \Omega}$, where $\overline{\mathbb{Q}}_l$ is the constant sheaf on $\mathcal{F}_{\boldsymbol{\nu}, \Omega}$.

We will denote the $i-th$ cohomology of perverse $t-$structure on $F\in \cD_c^b(X,\overline{\bbq_l})$ as $ ^p\mathbf{H}^i(F).$
\begin{definition}
    Let $\mathcal{P}_{\mathbf{V}}$ be the set consisting of those simple perverse sheaves $L$ in $\mathcal{D}^b_{G_{\mathbf{V}}}(\mathbf{E}_{\mathbf{V}, \Omega})$ such that $L$ is a direct summand (up to shifts) of $L_{\boldsymbol{\nu}}$ for some flag type $\boldsymbol{\nu}$ of $\mathbf{V}$. Let $\mathcal{M}_{\mathbf{V}}$ be the full subcategory of $\mathcal{D}^b_{G_{\mathbf{V}}}(\mathbf{E}_{\mathbf{V}, \Omega})$, consisting of direct sums of shifted simple perverse sheaves in $\mathcal{P}_{\mathbf{V}}$. Following \cite{OSNotes}, we call $\mathcal{M}_{\mathbf{V}}$ the category of Lusztig's sheaves for $Q$.
\end{definition}

The admissible automorphism gives a natural isomorphism $a: \mathbf{E}_{\mathbf{V}, \Omega} \to \mathbf{E}_{a(\mathbf{V}), \Omega}$ and induces the functor
$$
a^*: \mathcal{D}^b_{G_{\mathbf{V}}}(\mathbf{E}_{a(\mathbf{V}), \Omega}) \to \mathcal{D}^b_{G_{\mathbf{V}}}(\mathbf{E}_{\mathbf{V}, \Omega}).
$$
Later, we will denote $a^*$ as $\mathbf{a}^*$ for easy of reading. 
\begin{definition}
    Let $X$ be an algebraic variety over $\mathbf{k}$ with an $\bbF_q$-structure, $q=p^r,r\in \bbZ_{>0}$ and the corresponding Frobenius map $Fr^r:X\rightarrow X$, where $Fr$ induce $\bbF_p-$structure on $X$. Let $G$ be a connected algebraic group over $\mathbf{k}$ with an $\bbF_q$-structure and the Frobenius map $Fr^r:G\rightarrow G$ such that $G$ acts on $X$. We denote by $\cD_G^b(X)$ the $G$-equivariant bounded derived category of constructible $\overline{\bbQ}_l$-sheaves on $X$, $\cD^b_{G,m,r}(X)$ the subcategory consisting of mixed Weil complexes with Frobenius map $Fr^r:X\rightarrow X$.
\end{definition}
 We will denote ${Fr^r}^*$ as $\mathbf{Fr^r}^*$ for easy of reading.
For any mixed Weil complex $A\in \cD^b_{G,m}(X)$ with the Weil structure $\xi:\mathbf{Fr^r}^*(A)\rightarrow A$ and $x\in X^{Fr^r}, s\in \bbZ$, there is an isomorphism $H^s(\xi)_x:H^s(A)_x\rightarrow H^s(A)_x$ of the stalk at $x$ of the $s$-th cohomology sheaf. Taking the alternative sum of the traces of these isomorphisms, we obtain a value 
$$\chi_A(x)=\sum_{s\in \bbZ}(-1)^s\tr(H^s(\xi)_x)\in\overline{\bbQ}_l\cong \bbC$$
and a function $\chi_A\in \tilde{\cH}_{G^{Fr^r}}(X^{Fr^r})$. Moreover, $\chi_{-}$ induces a map from the Grothendieck group of $\cD^b_{G,m}(X)$ to $\tilde{\cH}_{G^{Fr^r}}(X^{Fr^r})$, see \cite[Lemma 5.3.12]{Pramod-2021}, which is called the trace map.
\subsection{Periodic Functors}

In this subsection, we review the definition of the periodic functor and refer to \cite[Chapter 11]{lusztig2010introduction} for more details. Let $o$ be a fixed positive integer.

Let $\mathcal{C}$ be a $\overline{\mathbb{Q}}_l$-linear additive category. A periodic functor on $\mathcal{C}$ is a linear functor $\mathbf{a}^*: \mathcal{C} \to \mathcal{C}$ such that $(\mathbf{a}^*)^o$ is the identity functor on $\mathcal{C}$.

\begin{definition}
Let $\mathbf{a}^*$ be a periodic functor on a category $\mathcal{C}$. We define the category $\tilde{\mathcal{C}}$ as follows:
\begin{itemize}
    \item[$\bullet$] Its objects are pairs $(A, \varphi)$, where $A \in \mathcal{C}$ and $\varphi: \mathbf{a}^*(A) \to A$ is an isomorphism in $\mathcal{C}$ such that the composition
    $$
    A = \mathbf{a}^{*o}(A) \xrightarrow{\mathbf{a}^{*(o-1)}(\varphi)} \mathbf{a}^{*(o-1)}(A) \to \dots \to \mathbf{a}^*(A) \xrightarrow{\varphi} A
    $$
    is the identity morphism on $A$.
    \item[$\bullet$] For any $(A, \varphi), (A', \varphi') \in \tilde{\mathcal{C}}$, the morphism space
    $$
    \Hom_{\tilde{\mathcal{C}}}((A, \varphi), (A', \varphi')) = \{ f \in \Hom_{\mathcal{C}}(A, A') \mid f\varphi = \varphi'(\mathbf{a}^*(f)) \}.
    $$
    \item[$\bullet$] The direct sum of $(A, \varphi), (A', \varphi') \in \tilde{\mathcal{C}}$ is defined naturally by $(A \oplus A', \varphi \oplus \varphi')$.
    
\end{itemize}
\end{definition}

\begin{definition}
An object $(A, \varphi) \in \tilde{\mathcal{C}}$ is called traceless if there exists an object $B \in \mathcal{C}$ and an integer $t \geqslant 2$ dividing $o$ such that $\mathbf{a}^{*t}(B) \cong B$, $A \cong B \oplus \mathbf{a}^*(B) \oplus \dots \oplus \mathbf{a}^{*(t-1)}(B)$, and $\varphi: \mathbf{a}^*(A) \to A$ corresponds to the isomorphism $\mathbf{a}^*(B) \oplus \mathbf{a}^{*2}(B) \oplus \dots \oplus \mathbf{a}^{*t}(B)$, taking $\mathbf{a}^{*s}(B)$ onto $\mathbf{a}^*(\mathbf{a}^{*s-1}(B))$ for $1 \leqslant s \leqslant t-1$ and taking $\mathbf{a}^{*t}(B)$ onto $B$, thus giving a permutation between the direct summands of $A$ and $\mathbf{a}^*A$.
\end{definition}

\begin{definition}
We say that $A$ and $B$ in $\tilde{\mathcal{C}}$ are isomorphic up to traceless elements if there exist traceless objects $C$ and $D$ such that $A \oplus C \cong B \oplus D$.
\end{definition}
\subsection{Canonical basis of symmetrizable cases}
As the graph below, 
$$\begin{tikzcd}
{\mf_{\boldsymbol{\nu},\Omega}} \arrow[r, "a"] \arrow[d, "\pi_{\boldsymbol{\nu}}"] & {\mf_{a\boldsymbol{\nu},\Omega}} \arrow[d, "\pi_{a\boldsymbol{\nu}}"] \\
\bfE_{\bfV} \arrow[r, "a"]                                                         & \bfE_{a\bfV}                                                         
\end{tikzcd}$$

There is a natural map $\phi_0$ given by the natural equivalence: $\phi_0:\mathbf{a}^*L_{a\boldsymbol{\nu}}\cong \mathbf{a}^*{\pi_{a\boldsymbol{\nu}}}_!(\overline{\bbQ_l}|_{\mf_{a\boldsymbol{\nu},\Omega}})\cong {\pi_{\boldsymbol{\nu}}}_!\mathbf{a}^*(\overline{\bbQ_l}|_{\mf_{a\boldsymbol{\nu},\Omega}})\cong {\pi_{\boldsymbol{\nu}}}_!(\overline{\bbQ_l}|_{\mf_{\boldsymbol{\nu},\Omega}})\cong L_{\boldsymbol{\nu}}$.

And by $\phi_0:\mathbf{a}^*L_{\boldsymbol{a\nu}}\cong L_{\boldsymbol{\nu}}$, the functor $\mathbf{a}^*$ could be restricted on $\mm_{a(\bfV)}$ to $\mm_{\bfV}$. 

The following definition is from \cite{lusztig2010introduction}.
\begin{definition}\label{Lusztig d}
    For an additive category $\mm_{\bfV}$, the Grothendieck group $\mk(\widetilde{\mm_{\bfV}})$ of $\widetilde{\mm_{\bfV}}$ is defined as follows:
    
    $\bullet$ For $(A,\phi)\cong (A',\phi')$, their image $[(A,\phi)]=[(A',\phi')]$ in the Grothendieck group.

    $\bullet$ $[(A,s\phi)]=s[(A,\phi)]$, $s\in \overline{\bbq_l}$.

    $\bullet$ $[(A,\phi)[n]]=v^n[(A,\phi)]$.

    $\bullet$ $[(A\oplus B,\phi\oplus \psi)]=[(A,\phi)]+[(B,\psi)]$.

    $\bullet$ If $(A,\phi)$ is a traceless element in $\widetilde{\cD}$, then $[(A,\phi)]=0$.
\end{definition}
By \cite[Proposition 12.5.2]{lusztig2010introduction}, for any fixed element $P \in \mP_{\bfV}$ under the action of $\mathbf{a}^*$, there exists a unique $\phi_P$ such that $(\mathbf{D}(P,\phi_P)) \cong (P,\phi_P)$ up to a sign $\pm1$.
Let $\mathcal{A} = \bbZ[v, v^{-1}]$. According to \cite[Proposition 12.6.3]{lusztig2010introduction}, the $\mathcal{A}$-module spanned by $[(P, \phi_P)]$, where $P \in \mP_{\bfV}$, is equal to the $\mathcal{A}$-module spanned by $[(L_{\boldsymbol{\nu}}, \phi_0)]$ for all $\boldsymbol{\nu}$ such that $a\boldsymbol{\nu} = \boldsymbol{\nu}$.

In \cite[12.6.4]{lusztig2010introduction}, Lusztig denotes the set ${[P, \phi_P] \mid P \in \mP_{\bfV}}$ (or ${\pm[P, \phi_P] \mid P \in \mP_{\bfV}}$ when $o$ is odd) by $\mathcal{B}_{\nu} = \mathcal{B}'_{\nu} \cup -\mathcal{B}'_{\nu}$, referring to it as the signed basis, where $\mathcal{B}'_{\nu}$ is a basis of $\mk(\widetilde{\mm_{\bfV}})$.

Furthermore, Lusztig also defines induction functors
 $$\Ind_{\nu',\nu''}^{\nu}:\cD_{\bfG_{\bfV'}}^b(\bfE_{\bfV'})\times\cD_{\bfG_{\bfV''}}^b(\bfE_{\bfV''})\rightarrow \cD_{\bfG_{\bfV}}^b(\bfE_{\bfV})$$ where the dimension vectors of the $I-$graded vector spaces $\bfV, \bfV', \bfV''$ are $\nu, \nu', \nu''$, respectively. Define $\bfE_{\bfV}'=\{(x,\bfW,\rho_1,\rho_2)|x\in \bfE_{\bfV},\bfW\cong \bfV''\text{as I-graded space}, \rho_1:\bfV/\bfW\cong \bfV', \rho_2: \bfW\cong \bfV''\}$, $\bfE_{\bfV}'=\{(x,\bfW)|x\in \bfE_{\bfV},\bfW\cong \bfV''\text{as I-graded space}\}$, with the maps defined by $p_1(x,\bfW,\rho_1,\rho_2)=(\rho_1(x|_{\bfV/\bfW})\rho_1^{-1},\rho_2(x|_{\bfW})\rho_2^{-1})$, $p_2(x,\bfW,\rho_1,\rho_2)=(x,\bfW)$ and $p_3(x,\bfW)=x$.
$$\begin{tikzcd}
\bfE_{\bfV'}\times \bfE_{\bfV''} & \bfE_{\bfV}' \arrow[l, "p_1"'] \arrow[r, "p_2"] & \bfE_{\bfV}'' \arrow[r, "p_3"] & \bfE_{\bfV}
\end{tikzcd}$$
We denote the dimension of fibers of the smooth morphisms $p_1$ and $p_2$ by $d_1$ and $d_2$, respectively. Define the induction functor as
\[
\Ind_{\nu',\nu''}^{\nu} := {p_3}_! {p_2}_b p_1^* [d_1 - d_2] \left( \frac{d_1 - d_2}{2} \right).
\]
Since $\mathbf{a}^* {p_3}_! \cong {p_3}_! \mathbf{a}^*$, there exists a natural transformation
\[
\mathbf{a}^* \Ind_{\nu',\nu''}^{\nu} \rightarrow \Ind_{\nu',\nu''}^{\nu} \mathbf{a}^*.
\]
Moreover, Lusztig proves in \cite[Lemma 12.3.2]{lusztig2010introduction} that
\[
\Ind_{\nu',\nu''}^{\nu} \left( (L_{\boldsymbol{\nu'}}, \phi_0) \boxtimes (L_{\boldsymbol{\nu''}}, \phi_0) \right) = (L_{\boldsymbol{\nu'\nu''}}, \phi_0).
\]
Hence, the induction functor $\Ind$ descends to a well-defined operation
\[
\Ind : \widetilde{\mm_{\bfV'}} \times \widetilde{\mm_{\bfV''}} \rightarrow \widetilde{\mm_{\bfV}}.
\]

This induction functor induces a multiplication $\cdot$ on the graded direct sum
\[
\tilde{\mk}=\bigoplus_{\bfV} \mk(\widetilde{\mm_{\bfV}}).
\]

In \cite[4.4]{lusztig1998canonical}, Lusztig provides the following formula. For 
\[
b = [(P_b, \phi_{P_b})] \in \mathcal{B}_{\nu}, \quad
b' = [(P_{b'}, \phi_{P_{b'}})] \in \mathcal{B}_{\nu'}, \quad
b'' = [(P_{b''}, \phi_{P_{b''}})] \in \mathcal{B}_{\nu''}
\]
we take $\alpha_{b}=\Hom\left( \phi_{P_b}, {^p}\mathbf{H}^j \left( \Ind_{\nu', \nu''}^{\nu}(\phi_{P_{b'}} \boxtimes \phi_{P_{b''}}) \right) \right)\in \End_{\bbC}\left(\Hom\left( P_b, {^p}\mathbf{H}^j \left( \Ind_{\nu', \nu''}^{\nu}(P_{b'} \boxtimes P_{b''}) \right) \right)\right)$. %and $\theta_{\lambda}=\Hom(\phi_{P_{b'}}\boxtimes \phi_{P_{b''}}, ^p\mathbf{H}^j(\mathbf{Res}_{\nu}^{\nu',\nu''}\phi_{P_b}))\in\End_{\bbC}(\Hom(P_{b'}\boxtimes P_{b''}, ^p\mathbf{H}^j(\mathbf{Res}_{\nu}^{\nu',\nu''}P_{b})))$.
We denote $m_{b',b''}^{b,j}=\operatorname{tr}\left( \alpha_b, \Hom\left( P_b, {^p}\mathbf{H}^j \left( \Ind_{\nu', \nu''}^{\nu}(P_{b'} \boxtimes P_{b''}) \right) \right) \right)$.
\begin{proposition}\cite{lusztig1998canonical}[6.3]\label{prop L}
    With the notations above and $\nu = \nu' + \nu''$, we have:
\begin{equation} \label{1}
b' \cdot b'' = \sum_{b \in \mathcal{B}_{\nu}',j\in \bbZ} m_{b',b''}^{b,j} v^{-j} b,
\end{equation}

%Similarly, \[\begin{aligned}&\mathbf{Res}_{\nu}^{\nu',\nu''}(b)\\&=\sum_{b',b''}tr(\theta_{\lambda},\Hom(P_{b'}\boxtimes P_{b''}, ^p\mathbf{H}^j(\mathbf{Res}_{\nu}^{\nu',\nu''}P_{b})))v^{-j}b'\otimes b'',\end{aligned}\]And we denote that $m_{b,j}^{b',b''}=tr(\theta_{\lambda},\Hom(\phi_{P_{b'}}\boxtimes \phi_{P_{b''}}, ^p\mathbf{H}^j(\mathbf{Res}_{\nu}^{\nu',\nu''}\phi_{P_{b}}))).$

\end{proposition}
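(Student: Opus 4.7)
The plan is to take the semisimple decomposition of $\Ind_{\nu',\nu''}^{\nu}(P_{b'}\boxtimes P_{b''})$ in $\mm_{\bfV}$ and transport the $\mathbf{a}^*$-equivariant structure through it, then read off the class in $\mk(\wt{\mm_{\bfV}})$ using the defining relations of Definition~\ref{Lusztig d}.

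First, I would observe that by Lusztig's compatibility in \cite[Lemma 12.3.2]{lusztig2010introduction} the complex $\Ind_{\nu',\nu''}^{\nu}(P_{b'}\boxtimes P_{b''})$ lies in $\mm_{\bfV}$, so the decomposition theorem yields a canonical isomorphism
$$\Ind_{\nu',\nu''}^{\nu}(P_{b'}\boxtimes P_{b''}) \cong \bigoplus_{j\in\bbZ,\,P\in \mP_{\bfV}} \Hom\!\bigl(P,{^p}\mathbf{H}^j\Ind_{\nu',\nu''}^{\nu}(P_{b'}\boxtimes P_{b''})\bigr)\otimes_{\overline{\bbQ}_l} P[-j].$$
Next, I would transport $\phi_{P_{b'}}\boxtimes\phi_{P_{b''}}$ through the natural isomorphism $\mathbf{a}^*\Ind_{\nu',\nu''}^{\nu}\cong \Ind_{\nu',\nu''}^{\nu}(\mathbf{a}^*\boxtimes \mathbf{a}^*)$ to obtain an isomorphism $\Phi:\mathbf{a}^*\Ind(P_{b'}\boxtimes P_{b''})\to \Ind(P_{b'}\boxtimes P_{b''})$. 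By the functoriality of ${^p}\mathbf{H}^j$ and the semisimple splitting, $\Phi$ restricts on each isotypic piece to an endomorphism of the multiplicity space, which is exactly the $\alpha_P$ appearing in the statement.

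Second, I would split the sum according to the $\mathbf{a}^*$-orbit on $\mP_{\bfV}$. If the orbit of $P$ has length $t\geq 2$, then collecting the summands $\mathbf{a}^{*s}(P)$ ($0\leq s\leq t-1$) together with $\Phi$ matches precisely the pattern in the definition of traceless objects (Definition~1.3), hence their class in $\mk(\wt{\mm_{\bfV}})$ vanishes. For $P=P_b$ with $b\in \mathcal{B}_\nu'$, the restricted $\Phi$ factors as $\alpha_b\otimes \phi_{P_b}$ on $\Hom(P_b,{^p}\mathbf{H}^j\Ind(\cdots))\otimes P_b[-j]$, after normalizing by the uniqueness statement of \cite[Proposition 12.5.2]{lusztig2010introduction}. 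Applying the scalar and shift rules of Definition~\ref{Lusztig d} gives
$$\bigl[\Hom(P_b,{^p}\mathbf{H}^j\Ind(P_{b'}\boxtimes P_{b''}))\otimes P_b[-j],\,\alpha_b\otimes \phi_{P_b}\bigr] = \operatorname{tr}(\alpha_b)\,v^{-j}\,[(P_b,\phi_{P_b})],$$
so summing over $b\in \mathcal{B}_\nu'$ and $j\in\bbZ$ and applying $\cdot$ (which is defined to be the image of $\Ind$ on classes) yields the formula.

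The main obstacle is the normalization step: one must check that when $\Phi$ is restricted to the $\mathbf{a}^*$-fixed summand $P_b$, the resulting isomorphism is exactly the distinguished $\phi_{P_b}$ (rather than a scalar multiple of it), so that the trace $m_{b',b''}^{b,j}$ is measured against the correct basis element. This is a compatibility between the self-duality of $\Ind_{\nu',\nu''}^{\nu}$ and the self-duality characterization of $\phi_{P_b}$; once it is in hand, the rest is a direct bookkeeping in the Grothendieck group. Handling the traceless contributions also requires that the identification of $\bigoplus_s \mathbf{a}^{*s}(P)[-j]$ with its $\mathbf{a}^*$-cyclic structure matches the exact combinatorics of Definition~1.3, which is straightforward but needs to be written out carefully.
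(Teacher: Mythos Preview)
The paper does not supply its own proof of this proposition; it is stated with a citation to \cite{lusztig1998canonical}, Section~6.3, and no argument follows. Your outline is essentially the standard argument (and the one Lusztig gives there): decompose the semisimple complex $\Ind_{\nu',\nu''}^{\nu}(P_{b'}\boxtimes P_{b''})$ into isotypic pieces indexed by $\mP_{\bfV}$, transport the periodic structure $\Phi$ induced from $\phi_{P_{b'}}\boxtimes\phi_{P_{b''}}$, observe that the pieces supported on non-fixed $\mathbf{a}^*$-orbits assemble into traceless objects, and for each $\mathbf{a}^*$-fixed $P_b$ read off the coefficient as the trace of the induced endomorphism on the multiplicity space.

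One remark on the obstacle you single out. You phrase the normalization issue as having to check that $\Phi$ restricted to the $P_b$-summand equals $\phi_{P_b}$ on the nose, invoking a Verdier-duality compatibility. That is not how the argument runs, and no such check is needed. The restriction of $\Phi$ to the isotypic summand $\Hom(P_b,{^p}\mathbf{H}^j(\cdots))\otimes P_b[-j]$ has the form $\beta\otimes\psi$ with $\psi:\mathbf{a}^*P_b\to P_b$ some nonzero isomorphism, and the pair $(\beta,\psi)$ is determined only up to $(\beta,\psi)\mapsto(c\beta,c^{-1}\psi)$. The paper (following Lusztig) \emph{defines} $\alpha_b=\Hom\bigl(\phi_{P_b},{^p}\mathbf{H}^j(\Ind(\phi_{P_{b'}}\boxtimes\phi_{P_{b''}}))\bigr)$, which is precisely the normalization $\psi=\phi_{P_b}$; any scalar between $\psi$ and $\phi_{P_b}$ is absorbed into $\alpha_b$ by construction. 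Since $\alpha_b^{\,o}=\mathrm{id}$, diagonalizing it and applying the rules of Definition~\ref{Lusztig d} yields the coefficient $\operatorname{tr}(\alpha_b)\,v^{-j}$ directly. So the step you flag as the main obstacle is already handled by the definition of $m_{b',b''}^{b,j}$.
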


Now for category $\mathcal{D}^b_{G_{\mathbf{V}}}(\mathbf{E}_{a(\mathbf{V}), \Omega})$ and the periodic functor
$$
\mathbf{a}^*: \mathcal{D}^b_{G_{\mathbf{V}}}(\mathbf{E}_{a(\mathbf{V}), \Omega}) \to \mathcal{D}^b_{G_{\mathbf{V}}}(\mathbf{E}_{\mathbf{V}, \Omega}).
$$

From now on, we will consider the mixed Weil complexes category $\mathcal{D}^b_{m,r, G_{\mathbf{V}}}(\mathbf{E}_{\mathbf{V}, \Omega})$ whose object has form like $(F,\phi),$ where $\phi: \mathbf{Fr^r}^*F\rightarrow F$ is its Weil structure. And if $a(\mathbf{V}) = \mathbf{V}$, $\mathbf{a}^*$ is a periodic functor on $\mathcal{D}^b_{m,r, G_{\mathbf{V}}}(\mathbf{E}_{\mathbf{V}, \Omega})$.
\begin{definition}\label{modified d}
The modified Grothendieck group $\mk_{\bfV}^r$ of $\widetilde{\mathcal{D}^b_{m,r,G_{\mathbf{V}}}(\mathbf{E}_{\mathbf{V}, \Omega})}$ when $\bfV=a(\bfV)$ is defined as follows:
\begin{itemize}
    \item[$\bullet$] For $(A, \varphi_A) \in \widetilde{\mathcal{D}^b_{m,r, G_{\mathbf{V}}}(\mathbf{E}_{\mathbf{V}, \Omega})}$, its image in the Grothendieck group of $\widetilde{\mathcal{D}^b_{m,r,G_{\mathbf{V}}}(\mathbf{E}_{\mathbf{V}, \Omega})}$ is denoted as $[(A, \varphi_A)]$.
    \item[$\bullet$] If $(A, \varphi_A) \xrightarrow{f} (B, \varphi_B) \xrightarrow{g} (C, \varphi_C) \xrightarrow{+1} $ satisfies that $A\xrightarrow{f} B\xrightarrow{g} C \xrightarrow{+1}$ is a triangle in $\mathcal{D}^b_{G_{\mathbf{V}}}(\mathbf{E}_{\mathbf{V}, \Omega})$ which is suitable with Weil structure, then $[(B, \varphi_B)] = [(A, \varphi_A)] + [(C, \varphi_C)]$ in the modified Grothendieck group.
    \item[$\bullet$] $[(A, s\varphi_A)] = s[(A, \varphi_A)]$, for $s \in \overline{\mathbb{Q}}_l$.
    \item[$\bullet$] If $(A, \varphi_A)$ is a traceless element in $\widetilde{\mathcal{D}^b_{m, r, G_{\mathbf{V}}}(\mathbf{E}_{\mathbf{V}, \Omega})}$, then $[(A, \varphi_A)] = 0$.
    \item[$\bullet$] $[(A\otimes \overline{\mathbb{Q}_l}(k),\varphi_A)]=q^{-k}[(A,\varphi_A)]$, $k\in \bbz.$
\end{itemize}
\end{definition}
The Grothendieck group from Definition \ref{modified d} and \ref{Lusztig d} would identify when $v=-\sqrt{q}^{-1}$.

We regard the map $a\circ Fr^r$ as a new Frobenius map on $\mathbf{E}_{\mathbf{V}, \Omega}$, when $a(\bfV)=\bfV$. Let $aFr^r:=a\circ Fr^r$. And for $(A,\phi_A)\in \widetilde{\mathcal{D}^b_{m, r, G_{\mathbf{V}}}(\mathbf{E}_{\mathbf{V}, \Omega})}$ where $A=(A,F)\in \mathcal{D}^b_{m,r, G_{\mathbf{V}}}(\mathbf{E}_{\mathbf{V}, \Omega})$, it gives $\mathbf{a}^*\mathbf{Fr^r}^*(A)\xrightarrow{\mathbf{Fr^r}^*(\phi_A)\circ F} A$ a Weil structure of Frobenius map $aFr^r$. It induces Weil structure of Frobenius functor $\mathbf{a}^*\circ \mathbf{Fr^r}^*$ on all elements in $\widetilde{\mathcal{D}^b_{m,r, G_{\mathbf{V}}}(\mathbf{E}_{\mathbf{V}, \Omega})}$. For $x=aFr^r(x)$, $\mathbf{H}^i_x(\mathbf{Fr^r}^*(\phi_A)\circ F)\in \End_{\overline{\bbq_l}}(\mathbf{H}^i_x(A))$

With the notation above, we denote the trace map of $((A,F),\phi_A)$ with Frobenius map $aFr^r$ as $\chi^a:\mathbf{E}_{\mathbf{V}, \Omega}^{aFr^r}\rightarrow \bbC$, where $\chi^a(x)=\sum_{i\in \bbZ}(-1)^itr(\mathbf{H}^i_x(A),\mathbf{H}^i_x(\mathbf{Fr^r}^*(\phi_A)\circ F)).$
\begin{proposition}\label{po1}
    The trace map $\chi^a$ of Weil structures of Frobenius functor $\mathbf{a}^*\circ \mathbf{Fr^r}^*$ could be induced from $\widetilde{\mathcal{D}^b_{m,r, G_{\mathbf{V}}}(\mathbf{E}_{\mathbf{V}, \Omega})}$ to its modified Grothendieck group.
\end{proposition}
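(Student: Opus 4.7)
The plan is to verify that the stalkwise trace function $\chi^a$ respects each of the five defining relations of the modified Grothendieck group in Definition \ref{modified d}, so that it factors through $\mk_{\bfV}^r$.

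Isomorphism invariance, $\oline{\bbq_l}$-linearity, and the Tate twist relation are all routine. Any morphism in $\widetilde{\cD^b_{m,r,G_{\bfV}}(\bfE_{\bfV,\Omega})}$ intertwines the combined Weil operator $\mathbf{Fr^r}^*(\phi_-)\circ F_-$, so the induced endomorphisms on the stalks over a fixed point are conjugate and therefore have equal traces. Scaling $\phi_A$ by $s\in\oline{\bbq_l}$ scales the combined operator on every stalk by $s$, producing the factor $s$ by linearity of trace. The Weil structure on $\oline{\bbq_l}(k)$ acts as multiplication by $q^{-k}$, which tensors into the operator on each stalk and produces the required factor.

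For the triangle relation, I would fix $x \in \bfE_{\bfV,\Omega}^{aFr^r}$ and apply the long exact sequence of cohomology sheaves obtained from the distinguished triangle $(A,\phi_A) \to (B,\phi_B) \to (C,\phi_C) \xrightarrow{+1}$. The assumed compatibility of the triangle with both the Frobenius structure and the periodic functor isomorphisms makes the long exact sequence equivariant for $\mathbf{Fr^r}^*(\phi_-)\circ F_-$; splicing it into short exact sequences and invoking the standard additivity of trace on short exact sequences of finite-dimensional vector spaces equipped with endomorphisms yields $\chi^a_B(x) = \chi^a_A(x) + \chi^a_C(x)$.

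The main obstacle is the vanishing of $\chi^a$ on a traceless element $(A, \phi_A)$, where $A \cong B \oplus \mathbf{a}^*(B) \oplus \cdots \oplus \mathbf{a}^{*(t-1)}(B)$ with $t \geq 2$ dividing $o$, $\mathbf{a}^{*t}(B) \cong B$, and $\phi_A$ the canonical cyclic shift. At a stalk over a point $x$ fixed by $aFr^r$, the vector space $\mathbf{H}^i_x(A)$ inherits the block decomposition indexed by $s = 0, 1, \ldots, t-1$. Since $\phi_A$ strictly permutes these $t$ summands in a single cycle, the induced operator $\mathbf{H}^i_x(\mathbf{Fr^r}^*(\phi_A)\circ F_A)$ sends the $s$-th block into a strictly different block, so its block-matrix presentation has all diagonal blocks zero. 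The key linear algebra input is that a cyclic permutation of $t \geq 2$ subspaces twisted by invertible block maps has trace zero regardless of the underlying action of $a$ on $x$, because the conclusion depends only on the block-off-diagonal structure relative to the given decomposition of $A$. Summing over $i$ with signs $(-1)^i$ therefore gives $\chi^a(x) = 0$ at every fixed point, completing the verification.
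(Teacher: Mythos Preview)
Your proof is correct and follows the same strategy as the paper: check that $\chi^a$ respects each relation in Definition~\ref{modified d}. The paper's proof is terser, disposing of each case in a single sentence (for the traceless case it just says ``$H^i(\varphi_A)_x$ is a traceless matrix''), whereas you spell out the block-off-diagonal structure explicitly; the substance is identical.
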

\begin{proof}
    We only need to proof that this trace map satisfies the relations in Definition 1.6.

    For $(A, \varphi_A) \xrightarrow{f} (B, \varphi_B) \xrightarrow{g} (C, \varphi_C) \xrightarrow{+1} $ satisfies that $A\xrightarrow{f} B\xrightarrow{g} C \xrightarrow{+1}$ is a triangle in $\mathcal{D}^b_{G_{\mathbf{V}}}(\mathbf{E}_{a(\mathbf{V}), \Omega})$ which is suitable with Weil structure. Thus $\chi^a_{(B,\varphi_B)}=\chi^a_{(C,\varphi_C)}+\chi^a_{(A,\varphi_A)}$.

    For $(A,k\varphi_A)$, its trace map is $k$ times trace map of $(A,\varphi_A)$.

    And for traceless element $(A,\varphi_A)$, $H^i(\varphi_A)_x$ is a traceless matrix, thus is 0 under trace map.

    And Weil structure of $A\otimes \overline{\mathbb{Q}_l}(k)$ is $k$ times Weil structure of $A$, thus we have the proof.
\end{proof}
In the following sections, we will only consider the symmetrizable cases of finite types. All of these cases are quivers of finite type with admissible automorphisms, and we denote the index set of $G_{\mathbf{V}}$-orbits of $\bfE_{\bfV}$ as $\Lambda_{\bfV}$ with partical order that $\lambda\leqslant\lambda'\in \Lambda_{\bfV}$ when corresponding orbit $\mo_{\lambda}\subset \overline{\mo_{\lambda'}}$.

\section{The coefficients between PBW basis and canonical basis}

For the reason that all of the $G_{\mathbf{V}}$-orbits $\mathcal{O}$, and for $x \in \mathcal{O}$, the stabilizer of $x$ in $G_{\mathbf{V}}$ is connected, the only kinds of local systems on $\mathcal{O}$ have the form $\oplus \overline{\mathbb{Q}}_l|_{\mathcal{O}}$. The only simple $G_{\mathbf{V}}$-equivariant perverse sheaf on $\mathcal{O}$ is $\overline{\mathbb{Q}}_l|_{\mathcal{O}}[\dim \mathcal{O}]$. For $\mathbf{E}_{\mathbf{V}, \Omega}$, its natural Frobenius map $Fr^r$ induces the decomposition
$$
\mathbf{E}_{\mathbf{V}, \Omega} \cong \mathbf{E}_{\mathbf{V}, \Omega}^{Fr^r} \times_{\text{Spec}(\mathbb{F}_q)} \text{Spec}(\overline{\mathbb{F}_q}),
$$
where $\mathbf{E}_{\mathbf{V}, \Omega}^{Fr^r}$ consists of the fixed points of $Fr^r$, and we have the map

$$\mathbf{E}_{\mathbf{V}, \Omega}\xrightarrow{p_{\bfV}} \mathbf{E}_{\mathbf{V}, \Omega}^{Fr^r}$$ which is induced by $\text{Spec}(\overline{\mathbb{F}_q})\rightarrow \text{Spec}(\mathbb{F}_q)$. By \cite{Pramod-2021}[Lemma 5.3.8], let

$$
\text{egf}=p_{\bfV}^*: \mathcal{D}^b_{c,G_{\bfV}^{Fr^r}}(\mathbf{E}_{\mathbf{V}, \Omega}^{Fr^r}) \to \mathcal{D}^b_{m,r,G_{\bfV}}(\mathbf{E}_{\mathbf{V}, \Omega}).
$$

The intersection cohomology complex $(IC(\mathcal{O}_{\lambda}, \overline{\mathbb{Q}}_l)$ with the Weil structure $\xi_{\lambda}$ is given by 
$$
(IC(\mathcal{O}_{\lambda}, \overline{\mathbb{Q}}_l), \xi_{\lambda}) = \text{egf}(IC(\mathcal{O}_{\lambda}^{Fr^r}, \overline{\mathbb{Q}}_l))=(j_{\lambda})_{!*}(\text{egf}(\overline{\mathbb{Q}}_l|_{\mathcal{O}_{\lambda}})[\dim \mo_{\lambda}](\frac{\dim \mo_{\lambda}}{2})),
$$
Similarly, $(C_{\lambda}, \iota_{\lambda})$ for standard sheaf complex $C_{\lambda}$ with Weil structure $\iota_{\lambda}$, that is
$$
(C_{\lambda}, \iota_{\lambda}) = (j_{\lambda})_!(\text{egf}(\overline{\mathbb{Q}}_l|_{\mathcal{O}_{\lambda}}))[\dim \mathcal{O}_{\lambda}](\frac{\dim \mathcal{O}_{\lambda}}{2}),
$$
where $\lambda \in \Lambda$, and $j_{\lambda}: \mathcal{O}_{\lambda} \to \mathbf{E}_{\mathbf{V}, \Omega}$ is the embedding. Now, we choose $\gamma_{\lambda}[-\dim \mathcal{O}_{\lambda}]: \mathbf{a}^*\overline{\mathbb{Q}}_l|_{\mathcal{O}_{\lambda}} \to \overline{\mathbb{Q}}_l|_{\mathcal{O}_{\lambda}}$ such that the trace map of $\mathbf{Fr^r}^*\gamma_{\lambda}[-\dim \mathcal{O}_{\lambda}] \circ \xi_{\lambda}$ is characteristic map on $\mo_{\lambda}$ that is $$f(x)=\left\{
\begin{array}{l}
1, \quad x\in \mo_{\lambda} \\
0, \quad otherwise
\end{array}
\right.$$ Actually $\gamma_{\lambda}[-\dim \mathcal{O}_{\lambda}]$ is identity.

Furthermore, $\psi_{\lambda}:=(j_{\lambda})_![\dim \mathcal{O}_{\lambda}](\gamma_{\lambda}[-\dim \mathcal{O}_{\lambda}]): \mathbf{a}^*C_{\lambda} \to C_{\lambda}$. We denote that $\omega$ is the $n$-th root of 1.

For the reason that $IC(\mo_{\lambda},\overline{\bbq_l})$ is Lusztig simple perverse sheaf as in Definition 1.1, and by \cite{lusztig1998canonical}, there exists $\alpha_{\lambda}: \mathbf{a}^*IC(\mo_{\lambda},\overline{\bbq_l})\rightarrow IC(\mo_{\lambda},\overline{\bbq_l})$ satisfies the following properties:\\
$\bullet$ $\alpha_{\lambda}\circ \mathbf{a}^*\alpha_{\lambda}\circ ... \circ (\mathbf{a}^*)^{n-1}\alpha_{\lambda}=id$,\\
$\bullet$ $\mathbf{Fr^r}^*(\alpha_{\lambda})\circ \xi_{\lambda}=\mathbf{a}^*(\xi_{\lambda})\circ \alpha_{\lambda}$,\\
$\bullet$ $j_{\lambda}^*(\alpha_{\lambda})=id|_{\overline{\bbQ_l}[\dim\mo_{\lambda}]|_{\mo_{\lambda}}}$.

\begin{lemma}\label{lemma 1}
    The $\mathbb{Z}[\omega,\sqrt{q},\sqrt{q}^{-1}]-$group generated by $[IC(\mathcal{O}_{\lambda}, \overline{\mathbb{Q}}_l), \alpha_{\lambda}]$ in the modified Grothendieck group is free with basis $[IC(\mathcal{O}_{\lambda}, \overline{\mathbb{Q}}_l), \alpha_{\lambda}]$, and $\mathbb{Z}[\omega,\sqrt{q},\sqrt{q}^{-1}]-$group generated by $[C_{\lambda'}, \psi_{\lambda'}]$ in the modified Grothendieck group is free with basis $[C_{\lambda'}, \psi_{\lambda'}]$.
\end{lemma}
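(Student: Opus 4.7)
The plan is to detect linear independence by passing to trace functions via the map $\chi^a$ of Proposition \ref{po1}, exploiting the elementary fact that the characteristic functions of distinct $G_{\mathbf{V}}$-orbits are linearly independent in the space of $\overline{\mathbb{Q}}_l$-valued functions on $\mathbf{E}_{\mathbf{V},\Omega}^{aFr^r}$. Since $\chi^a$ is well-defined on the modified Grothendieck group, any non-trivial $\mathbb{Z}[\omega,\sqrt{q},\sqrt{q}^{-1}]$-linear relation on the proposed basis would descend to a non-trivial relation on trace functions, which we will rule out.

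First I would handle the standard sheaves $(C_\lambda,\psi_\lambda)$. By construction $C_\lambda$ is the $!$-extension from $\mathcal{O}_\lambda$ of the constant sheaf carrying the Weil structure $\iota_\lambda$ coming from $\text{egf}$, together with the $\mathbf{a}^*$-structure $\psi_\lambda$ built out of $\gamma_\lambda$. The defining property of $\gamma_\lambda$ states precisely that the trace attached to $\mathbf{Fr^r}^*\gamma_\lambda[-\dim\mathcal{O}_\lambda]\circ\xi_\lambda$ on $\mathcal{O}_\lambda^{aFr^r}$ is the characteristic function, and incorporating the shift $[\dim\mathcal{O}_\lambda]$ and Tate twist $(\dim\mathcal{O}_\lambda/2)$ contributes a unit scalar $(-\sqrt{q})^{\dim\mathcal{O}_\lambda}\in\mathbb{Z}[\omega,\sqrt{q},\sqrt{q}^{-1}]^{\times}$. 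Since $C_\lambda$ vanishes outside $\mathcal{O}_\lambda$, this gives
$$
\chi^a_{(C_\lambda,\psi_\lambda)}\;=\;(-\sqrt{q})^{\dim\mathcal{O}_\lambda}\,\mathbf{1}_{\mathcal{O}_\lambda^{aFr^r}}.
$$
Because the orbits are pairwise disjoint and the leading coefficients are invertible, the family $\{\chi^a_{(C_\lambda,\psi_\lambda)}\}_{\lambda}$ is $\mathbb{Z}[\omega,\sqrt{q},\sqrt{q}^{-1}]$-linearly independent, and hence so is $\{[C_\lambda,\psi_\lambda]\}_{\lambda}$.

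For the IC sheaves I would argue by a unitriangular reduction to the standard case along the closure order on $\Lambda_{\mathbf{V}}$. The support of $IC(\mathcal{O}_\lambda,\overline{\mathbb{Q}}_l)$ is $\overline{\mathcal{O}_\lambda}=\bigsqcup_{\mu\leqslant\lambda}\mathcal{O}_\mu$, so $\chi^a_{(IC(\mathcal{O}_\lambda),\alpha_\lambda)}$ is supported there. The normalizations $j_\lambda^*(\alpha_\lambda)=\text{id}$ and $\mathbf{Fr^r}^*(\alpha_\lambda)\circ\xi_\lambda=\mathbf{a}^*(\xi_\lambda)\circ\alpha_\lambda$ force the restriction of this trace function to the open stratum $\mathcal{O}_\lambda$ to agree with $\chi^a_{(C_\lambda,\psi_\lambda)}|_{\mathcal{O}_\lambda}$. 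Therefore
$$
\chi^a_{(IC(\mathcal{O}_\lambda),\alpha_\lambda)}\;=\;\chi^a_{(C_\lambda,\psi_\lambda)}\;+\;\sum_{\mu<\lambda}c_{\mu,\lambda}\,\mathbf{1}_{\mathcal{O}_\mu^{aFr^r}}
$$
for some $c_{\mu,\lambda}\in\mathbb{Z}[\omega,\sqrt{q},\sqrt{q}^{-1}]$. After dividing through by the invertible leading scalars this is an upper-unitriangular change of basis between the two families of trace functions, and the independence of the standard classes transfers to the IC classes.

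The hard part will be pinning down the trace function of $(C_\lambda,\psi_\lambda)$ with respect to the \emph{combined} Frobenius $aFr^r$: one must check carefully that the $\text{egf}$-plus-$\gamma_\lambda$ recipe produces the characteristic function of $\mathcal{O}_\lambda^{aFr^r}$ (not merely of $\mathcal{O}_\lambda^{Fr^r}$), and that the leading scalar genuinely lies in $\mathbb{Z}[\omega,\sqrt{q},\sqrt{q}^{-1}]^{\times}$. A minor auxiliary point is to verify that $\mathcal{O}_\lambda^{aFr^r}\neq\emptyset$ for every $a$-stable $\lambda$, which follows from a Lang--Steinberg-type argument applied to the connected group $G_{\mathbf{V}}$ acting transitively on $\mathcal{O}_\lambda$ with connected stabilizer. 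Once the standard case is in hand, the IC step is essentially formal.
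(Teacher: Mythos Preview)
Your proposal is correct and is exactly the paper's approach: its one-line proof simply invokes Proposition~\ref{po1} to pass to trace functions, and you have spelled out that passage via the explicit computation for $(C_\lambda,\psi_\lambda)$ and the unitriangular reduction for the IC classes. (One harmless slip: the leading scalar is $(-\sqrt{q})^{-\dim\mathcal{O}_\lambda}$ rather than $(-\sqrt{q})^{\dim\mathcal{O}_\lambda}$, as the paper uses later; both are units, so the independence argument is unaffected.)
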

\begin{proof}
    By Proposition~\ref{po1} under trace maps, these elements form a set of basis.
\end{proof}
\begin{lemma}\label{lemma 2}
For simple perverse sheave $IC(\mo_{\lambda},\overline{\bbq_l})$, and $\bbF_q-$point $x\in \mo_{\lambda'}\subset \overline{\mo_{\lambda}}$, 

1, $\bfH^i_x(IC(\mo_{\lambda},\overline{\bbq_l}))=0$ when $i-\dim\mo_{\lambda}$ is odd

2, $\bfH^i_x(IC(\mo_{\lambda},\overline{\bbq_l}))=0$ when $i>-\dim\mo_{\lambda'}, \lambda'\not=\lambda$.

3, $\dim\bfH^{-\dim \mo_{\lambda}}_x(IC(\mo_{\lambda},\overline{\bbq_l}))=1$, when $x\in \mo_{\lambda}$.
\end{lemma}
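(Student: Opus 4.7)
The three claims split naturally: (3) and (2) are formal consequences of the defining properties of the middle-perversity intermediate extension, while (1) is a parity statement which requires the finite-type geometry of $\overline{\mathcal{O}_\lambda}$.

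For (3), the intermediate extension $(j_\lambda)_{!*}$ preserves the $*$-restriction to the open stratum, so $j_\lambda^{*}IC(\mathcal{O}_\lambda,\overline{\mathbb{Q}}_l) \cong \overline{\mathbb{Q}}_l|_{\mathcal{O}_\lambda}[\dim \mathcal{O}_\lambda]$ up to Tate twist; the stalk at any $\mathbb{F}_q$-point $x\in\mathcal{O}_\lambda$ is therefore one-dimensional in degree $-\dim\mathcal{O}_\lambda$ and vanishes elsewhere. For (2), the characterizing support condition of the middle-perversity IC gives $\mathcal{H}^i(j_{\lambda'}^{*}IC(\mathcal{O}_\lambda,\overline{\mathbb{Q}}_l))=0$ for all $i\geq -\dim\mathcal{O}_{\lambda'}$ whenever $\lambda'\neq\lambda$; this immediately implies $\mathbf{H}^i_x(IC)=0$ for $i>-\dim\mathcal{O}_{\lambda'}$.

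For (1), I would combine a Reineke-type birational resolution with the BBD decomposition theorem. In the finite-type setting, for each orbit $\mathcal{O}_\lambda$ one can find a flag type $\boldsymbol{\nu}$ (refining, e.g., an appropriate total ordering on positive roots compatible with the AR quiver) such that the proper map $\pi_{\boldsymbol{\nu}}\colon\mathcal{F}_{\boldsymbol{\nu}}\to \overline{\mathcal{O}_\lambda}$ is surjective and birational; in particular $\dim\mathcal{F}_{\boldsymbol{\nu}}=\dim\mathcal{O}_\lambda$. Then $IC(\overline{\mathcal{O}_\lambda})$ occurs with multiplicity one in $L_{\boldsymbol{\nu}}=(\pi_{\boldsymbol{\nu}})_{!}\overline{\mathbb{Q}}_l[\dim\mathcal{F}_{\boldsymbol{\nu}}]$, with remaining simple summands being shifts of $IC(\overline{\mathcal{O}_{\lambda''}})$ for $\lambda''<\lambda$. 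Since $\mathbf{H}^i_x(L_{\boldsymbol{\nu}})= H^{i+\dim \mathcal{F}_{\boldsymbol{\nu}}}(\pi_{\boldsymbol{\nu}}^{-1}(x))$, parity of $L_{\boldsymbol{\nu}}$ reduces to the claim that the fibre $\pi_{\boldsymbol{\nu}}^{-1}(x)$ admits a paving by affine spaces; in finite type this is a standard consequence (after Caldero--Reineke) of a Bialynicki--Birula decomposition for a generic cocharacter acting on $\mathcal{F}_{\boldsymbol{\nu}}$. Hence $L_{\boldsymbol{\nu}}$ is a parity complex. A straightforward induction on $\dim\mathcal{O}_\lambda$, peeling off the lower-dimensional $IC$-summands whose parity $\mathbf{H}^i_x=0$ for $i-\dim\mathcal{O}_{\lambda''}$ odd is already available inductively, then forces the same parity on $IC(\overline{\mathcal{O}_\lambda})$, the base case being ensured by (3) on the open stratum.

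The main obstacle is the paving-by-affines property of the fibres of $\pi_{\boldsymbol{\nu}}$ in the finite-type setting, together with the compatibility of the chosen flag type with the admissible automorphism $a$ so that the whole argument descends through the periodic functor $\mathbf{a}^{*}$; the latter is arranged by taking $\boldsymbol{\nu}$ to be $a$-invariant, which is always possible since $a$ permutes the indecomposable summands via its action on positive roots. Once these inputs are in place, the propagation of parity across the decomposition theorem and the bookkeeping with shifts and Tate twists are routine.
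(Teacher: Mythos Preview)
Your treatment of parts (2) and (3) coincides with the paper's: both invoke the defining support conditions of the intermediate extension, the paper citing \cite[Lemma 3.3.11]{Pramod-2021} for precisely the statements you write down.

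For part (1) the approaches differ. The paper offers no independent argument: it simply cites \cite[Corollary 10.7(a)]{lusztig1990canonical}, where Lusztig establishes parity (and purity) of the IC stalks in Dynkin type via an inductive argument using Fourier--Deligne transform and the Hall-algebra structure. Your route---choose a Reineke-type resolution $\pi_{\boldsymbol{\nu}}$ birational onto $\overline{\mathcal{O}_\lambda}$, deduce parity of $L_{\boldsymbol{\nu}}$ from an affine paving of the fibres, then propagate parity to the $IC$ summands through the decomposition theorem---is a legitimate and more self-contained geometric alternative. Two points deserve care, however. First, the ``straightforward induction'' is more delicate than you indicate: knowing inductively that each $IC(\overline{\mathcal{O}_{\lambda''}})$ with $\lambda''<\lambda$ has parity $\dim\mathcal{O}_{\lambda''}$ does not by itself let you peel it off from $L_{\boldsymbol{\nu}}$, since you must also know that the shifts $n$ with which it occurs satisfy $n\equiv\dim\mathcal{O}_\lambda-\dim\mathcal{O}_{\lambda''}\pmod 2$; this requires a nested downward induction on strata using both the stalk and costalk support conditions of $IC(\overline{\mathcal{O}_\lambda})$ (equivalently, that $L_{\boldsymbol{\nu}}$ is $*$- and $!$-even, the latter coming from Verdier self-duality). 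Second, the affine paving of the fibres $\pi_{\boldsymbol{\nu}}^{-1}(x)$ in Dynkin type is available in the literature but is not quite a Caldero--Reineke result in the form you invoke. Finally, your closing remark about choosing $\boldsymbol{\nu}$ to be $a$-invariant is unnecessary here: the lemma concerns only the bare $IC$ sheaf, with no periodic functor in sight.
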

\begin{proof}
For $i-\dim\mo_{\lambda}$ is odd, then by \cite[Corollary 10.7(a)]{lusztig1990canonical}, $\bfH^i_x(IC(\mo_{\lambda},\overline{\bbq_l}))=0$.

And by \cite[Lemma 3.3.11]{Pramod-2021} for $IC(\mo_{\lambda},\overline{\bbq_l})$ and $\mo_{\lambda'}\subset \overline{\mo_{\lambda}}-\mo_{\lambda}$, denoted $j_{\lambda'}:\mo_{\lambda'}\rightarrow \overline{\mo_{\lambda}}$, then $j_{\lambda'}^*IC(\mo_{\lambda},\overline{\bbq_l})\in \cD_c^{\leq-\dim\mo_{\lambda'}-1}(\mo_{\lambda'},\overline{\bbq_l}).$  And $j_{\lambda}^*IC(\mo_{\lambda},\overline{\bbq_l})\cong \overline{\bbq_l}|_{\mo_{\lambda}}[\dim\mo_{\lambda}]$.
\end{proof}
For $\alpha_{\lambda}$ could restrict on $\bfH^i_x(IC(\mo_{\lambda},\overline{\bbq_l}))$ when $a(x)=x, x\in \mo_{\lambda'}$, and in this case $\alpha^n=id$. We denoted the eigenspace decomposition over $\bfH^i_x(IC(\mo_{\lambda},\overline{\bbq_l}))$ of $\alpha_\lambda$ as $\bfH^i_x(IC(\mo_{\lambda},\overline{\bbq_l}))=\oplus_{s=1}^n \bfV_{-i-\dim\mo_{\lambda'},s}^{\lambda'},$ where eigenvalue of $\alpha_{\lambda}$ on $\bfV_{-i-\dim\mo_{\lambda'},s}^{\lambda'}$ is $\omega^s.$ For we will also consider the action of $\xi_{\lambda}$ acting on $\bfH^i_x(IC(\mo_{\lambda},\overline{\bbq_l}))$, we always choose $x\in \mo_{\lambda'}^{aFr^r}\cap \mo_{\lambda'}^{Fr^r}$. For quiver is finite type, thus $\mo_{\lambda'}^{Fr^r}\cap \mo_{\lambda'}^{aFr^r}\not=\emptyset.$

\begin{thm}\label{theorem}
For $\lambda,\lambda'\in \Lambda_{\bfV}$, there exists $p_{\lambda,\lambda'}(v)=\Sigma_{i\in \bbZ,s=1,\cdots ,n}v^{i}\omega^s \dim \bfV^{\lambda'}_{i,s}\in \bbZ[v,v^{-1},\omega]$. having the following property:

For any $r\in \bbZ_{>0}$ and $q=p^r$, in $\mk^r_{\bfV}$, we have
$$
[IC(\mathcal{O}_{\lambda}, \overline{\mathbb{Q}}_l), \alpha_{\lambda}] = \sum_{\lambda'} p_{\lambda, \lambda'}(-\sqrt{q}^{-1}) [C_{\lambda'}, \psi_{\lambda'}].
$$

\end{thm}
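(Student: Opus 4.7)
The plan is to decompose $[IC(\mo_\lambda,\overline{\bbq_l}),\alpha_\lambda]$ via a filtration by the $G_\bfV$-orbit stratification of $\overline{\mo_\lambda}$, and then to identify each stratum's contribution in terms of $[C_{\lambda'},\psi_{\lambda'}]$ using the stalk cohomology together with its $\alpha_\lambda$-eigenspace decomposition. First, for each orbit $\mo_{\lambda'}\subseteq\overline{\mo_\lambda}$, the open-closed distinguished triangle
\[
(j_{\lambda'})_!(j_{\lambda'})^* F \longrightarrow F|_U \longrightarrow F|_{U\setminus \mo_{\lambda'}} \xrightarrow{+1}
\]
is compatible with both the Weil structure $\xi_\lambda$ and the periodic structure $\alpha_\lambda$ because the orbit stratification is $a$-stable and defined over $\bbF_q$. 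Iterating over all strata of $\overline{\mo_\lambda}$ and using the triangle relation of Definition~\ref{modified d} yields, in $\mk^r_\bfV$,
\[
[(IC(\mo_\lambda,\overline{\bbq_l}),\alpha_\lambda)] = \sum_{\lambda'\leq\lambda} [(j_{\lambda'})_!(j_{\lambda'})^*(IC(\mo_\lambda,\overline{\bbq_l}),\alpha_\lambda)].
\]

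Next, on each stratum the restriction $(j_{\lambda'})^* IC(\mo_\lambda,\overline{\bbq_l})$ is a bounded complex of trivial local systems (since $G_\bfV$-orbits have connected stabilizers) whose cohomological support is constrained by Lemma~\ref{lemma 2}. Truncation triangles express its class as an alternating sum of the classes of its cohomology sheaves. At a fixed $\bbF_q$-point $x\in\mo_{\lambda'}^{aFr^r}\cap\mo_{\lambda'}^{Fr^r}$ (non-empty because the quiver is of finite type), the second bullet in the defining properties of $\alpha_\lambda$ shows $\alpha_\lambda$ and $\xi_\lambda$ commute on stalks, so one can simultaneously diagonalize them. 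The $\alpha_\lambda$-decomposition of $\bfH^i_x$ gives the eigenspaces $\bfV^{\lambda'}_{-i-\dim\mo_{\lambda'},s}$ with eigenvalue $\omega^s$. Packaging the alternating sign from the truncation, the eigenvalue $\omega^s$, the dimension $\dim\bfV^{\lambda'}_{k,s}$ (with $k=-i-\dim\mo_{\lambda'}$), and the normalizing shift and Tate twist $[\dim\mo_{\lambda'}](\dim\mo_{\lambda'}/2)$ that turns $(j_{\lambda'})_!\overline{\bbq_l}|_{\mo_{\lambda'}}$ into $C_{\lambda'}$, each contribution becomes $v^k\omega^s\dim\bfV^{\lambda'}_{k,s}\cdot [C_{\lambda'},\psi_{\lambda'}]$ with $v=-\sqrt{q}^{-1}$. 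Summing over $k\in\bbZ$ and $s=1,\ldots,n$ produces
\[
p_{\lambda,\lambda'}(v) = \sum_{k\in\bbZ,\ s=1,\ldots,n} v^k \omega^s \dim\bfV^{\lambda'}_{k,s} \in \bbZ[v,v^{-1},\omega],
\]
manifestly independent of $r$.

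The main obstacle is pinning down the precise power of $v$ contributed by the Frobenius $\xi_\lambda$ acting on each eigenspace $\bfV^{\lambda'}_{k,s}$. This requires the purity of the IC sheaf: for the finite-type (Dynkin) quivers considered here the stalk cohomology $\bfH^i_x(IC(\mo_\lambda,\overline{\bbq_l}))$ is pure, so Frobenius eigenvalues are exactly the expected powers of $\sqrt{q}$, compatible with the identification $v=-\sqrt{q}^{-1}$ and matching cleanly the shift and Tate-twist normalization in the definition of $C_{\lambda'}$. A cleaner route avoiding a direct purity discussion is to invoke Lemma~\ref{lemma 1} (linear independence of the $[C_{\lambda'},\psi_{\lambda'}]$) and to pin down the coefficients by the trace-map comparison of Proposition~\ref{po1}: both sides of the claimed identity, evaluated via the trace function at $x\in\mo_{\lambda'}^{aFr^r}\cap\mo_{\lambda'}^{Fr^r}$, produce the same value, and the fact that the geometric data $\dim\bfV^{\lambda'}_{k,s}$ and the eigenvalues $\omega^s$ are independent of $r$ forces the resulting coefficient to be exactly the evaluation at $-\sqrt{q}^{-1}$ of the polynomial $p_{\lambda,\lambda'}(v)$.
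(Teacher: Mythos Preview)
Your main line of argument is essentially the paper's own proof: iterate the open--closed triangle $(j_{\lambda'})_!(j_{\lambda'})^*\to(\cdot)\to i_!i^*\to$ over the orbit stratification of $\overline{\mo_\lambda}$, use that the restriction to each stratum is a sum of shifted/twisted constant sheaves because stabilizers are connected, invoke pointwise purity (the paper cites \cite[Corollary 10.7]{lusztig1990canonical}) to force the Frobenius eigenvalues on $\bfH^i_x(IC(\mo_\lambda,\overline{\bbq_l}))$ to be exactly $q^{i/2}$, and then split each multiplicity space $\bfV^{\lambda'}_r$ into $\alpha_\lambda$-eigenspaces $\bfV^{\lambda'}_{r,s}$ to read off $p_{\lambda,\lambda'}(v)$. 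The order of presentation differs (the paper establishes purity first and then writes the explicit semisimple decomposition $j_{\lambda'}^*IC\cong\bigoplus_r\overline{\bbq_l}[\dim\mo_{\lambda'}+r](\tfrac{\dim\mo_{\lambda'}+r}{2})\boxtimes\bfV^{\lambda'}_r$), but the content is the same.

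Two points deserve tightening. First, your ``cleaner route'' via the trace map does \emph{not} avoid purity. Without knowing the Frobenius eigenvalues on $\bfH^i_x$ are powers of $\sqrt{q}$, the value $\chi^a(IC,\alpha_\lambda)(x)$ is a priori a $\bbZ[\omega]$-combination of arbitrary Weil numbers, and there is no mechanism forcing these to assemble, as $r$ varies, into evaluations at $-\sqrt{q}^{-1}$ of a single Laurent polynomial with coefficients in $\bbZ[\omega]$; linear independence of the $[C_{\lambda'},\psi_{\lambda'}]$ under $\chi^a$ only tells you the coefficient at each fixed $r$, not that these coefficients interpolate polynomially. Purity is precisely the input that makes the $r$-independence go through. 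Second, you implicitly assume each stratum is $a$-stable when you pick $x\in\mo_{\lambda'}^{aFr^r}\cap\mo_{\lambda'}^{Fr^r}$. The paper handles the orbits with $a(\mo_{\lambda'})\neq\mo_{\lambda'}$ separately: they group into $a$-cycles whose contribution to $[IC,\alpha_\lambda]$ is traceless and hence zero in $\mk^r_\bfV$. You should state this explicitly rather than folding it into ``the stratification is $a$-stable''.
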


\begin{proof}

By \cite[Corollary 10.7]{lusztig1990canonical}, all of the eigenvalue of $\xi_{\lambda}$ on $\bfH^i_x(IC(\mo_{\lambda},\overline{\bbq_l}))$ where $x$ is $Fr^r$ fixed are $q^{\frac{i}{2}}$, and $\mo_{\lambda}$ is smooth. For $j_{\lambda}^*IC(\mo_{\lambda},\overline{\bbq_l})$ perverses the property above, it is pointwise pure. And for the reason that $\mo_{\lambda}$ is smooth, and is $G_{\bfV}-$orbit. Then the property that $j_{\lambda}^*IC(\mo_{\lambda},\overline{\bbq_l})$ is pointwise pure of weight 0 could induce that $j_{\lambda}^*IC(\mo_{\lambda},\overline{\bbq_l})$ is pure of weight 0.

 $(j_{\lambda'}^{Fr^r})^*IC(\mo_{\lambda}^{Fr^r},\overline{\bbq_l})$ is pure of weight zero, where $j_{\lambda'}^{Fr^r}:\mo_{\lambda'}^{Fr^r}\rightarrow \bfE_{\bfV,\Omega}^{Fr^r}$. Thus we also have that $j_{\lambda'}^*IC(\mo_{\lambda},\overline{\bbq_l})$ is semisimple perverse sheaves on $\mo_{\lambda'}$ with weight 0 by \cite{BBD}. Moreover eigenvalues of $\xi_{\lambda}$ on $\bfH^i_x(j_{\lambda'}^*IC(\mo_{\lambda},\overline{\bbq_l}))$ where $x$ is $Fr^r$ fixed point are all $q^{\frac{i}{2}}$. 

$$j_{\lambda'}^*IC(\mo_{\lambda},\overline{\bbq_l})\cong \oplus_r \overline{\bbq_{l}}|_{\mo_{\lambda'}}[\dim \mo_{\lambda'}+r](\frac{\dim \mo_{\lambda'}+r}{2})\boxtimes \bfV^{\lambda'}_r$$ where $\bfV^{\lambda'}_r$ is vector space with Weil complex on a point and $\xi_{\lambda}$ acts on $\bfV^{\lambda'}_r$ as an uniponent map.

Now we consider the triangle $j_!j^*\mf\rightarrow \mf \rightarrow i_!i^*\mf\xrightarrow{+1}$, where $\mf\in \mathcal{D}^{b}(\supp(\mf))$ and $\bfU$ is open in $\supp(\mf)$, $\bfV=\bfU^c$, $j:\bfU\rightarrow \supp(\mf)$. Weil structure of $\mf$ could be induced by $i^*$ where $i: \bfV\rightarrow \supp(\mf)$.

Now we choose that $\mf=IC(\mo_{\lambda},\overline{\bbq_l})$, $\bfU=\mo_{\lambda}$, it gives a triangle that $C_{\lambda}\rightarrow IC(\mo_{\lambda},\overline{\bbq_l})\rightarrow i_!i^*IC(\mo_{\lambda},\overline{\bbq_l})\xrightarrow{+1}$ which is suitable with the Weil structure.

And then we choose $\mf=i^*IC(\mo_{\lambda},\overline{\bbq_l})$, and open orbit $\mo_{\lambda'}$ in $\supp(\mf)$ as new $\bfU$. Then we would have that $(j_{\lambda'})_!(j_{\lambda'})^*i^*IC(\mo_{\lambda},\overline{\bbq_l})\rightarrow i^*IC(\mo_{\lambda},\overline{\bbq_l})\rightarrow (i_\lambda')_!(i_{\lambda'})^*i^*IC(\mo_{\lambda},\overline{\bbq_l})\xrightarrow{+1}$, where we denote $\mo_{\lambda'}\rightarrow \supp(i^*IC(\mo_{\lambda},\overline{\bbq_l}))$ also $j_{\lambda'}$ and $i_{\lambda'}:\bfV\rightarrow \supp(i^*IC(\mo_{\lambda},\overline{\bbq_l}))$

$(j_{\lambda'})^*i^*IC(\mo_{\lambda},\overline{\bbq_l})=(j_{\lambda'})^*IC(\mo_{\lambda},\overline{\bbq_l})\cong \oplus_r \overline{\bbq_{l}}|_{\mo_{\lambda'}}[\dim \mo_{\lambda'}+r](\frac{\dim \mo_{\lambda'}+r}{2})\boxtimes \bfV^{\lambda'}_r$

Then by induction that we could choose $\mf=(i_{\lambda'})^*i^*IC(\mo_{\lambda},\overline{\bbq_l})$ and so on, there we would have that in the Grothendieck group of $\mathcal{D}^{b}_{m,r,G_{\bfV}}(\bfE_{\bfV,\Omega})$  
$$[IC(\mo_{\lambda},\overline{\bbq_l})]=\oplus_{\lambda',r} \dim \bfV^{\lambda'}_r[C_{\lambda'}[r](\frac{r}{2})]$$ and all $\mo_{\lambda'}\not\subset \overline{\mo_{\lambda}}$ $\bfV^{\lambda'}_r=0$, $\bfV^{\lambda}_r=0$ for all $r\not=0$, $\dim \bfV_0^{\lambda}=1$.

For the reason that $j_!j^*\mf\rightarrow \mf \rightarrow i_!i^*\mf\xrightarrow{+1}$ communites with $\alpha_{\lambda}$, thus the morphism $\alpha_{\lambda}$ could be induced on $j_{\lambda'}^*IC(\mo_{\lambda},\overline{\bbq_l})$ noted as $\phi_{\lambda'}$ and preserve the two properties above, and if $a(\mo_{\lambda'})\not=\mo_{\lambda'}$, it is easy to show that this kind of elements gives traceless part.

And then $\phi_{\lambda'}=(\gamma_{\lambda'}[r],\tilde{\phi})$ on $\overline{\bbq_{l}}|_{\mo_{\lambda'}}[\dim \mo_{\lambda'}+r](\frac{\dim \mo_{\lambda'}+r}{2})\boxtimes \bfV^{\lambda'}_r$, where $\tilde{\phi}$ is morphism on $\bfV^{\lambda'}_{r}$ and $\tilde{\phi}^n=id$, and for the reason that $\tilde{\phi}$ is communitative with the morphism $\xi_{\lambda}$ on $\bfV^{\lambda'}_{r}$ thus we could consider the eigenspace $\bfV^{\lambda'}_{r,s}$ with eigenvalue $\omega^s$ of the map $\tilde{\phi}$, it provides a direct decomposition $\oplus_{s=0}^{n-1}\bfV^{\lambda'}_{r,s}$ of $\bfV^{\lambda'}_{r}$ where $\xi_{\lambda}$ is stable on $\bfV^{\lambda'}_{r,s}$. Thus $[(\overline{\bbq_{l}}|_{\mo_{\lambda'}}[\dim \mo_{\lambda'}+r](\frac{\dim \mo_{\lambda'}+r}{2})\boxtimes \bfV^{\lambda'}_r,\phi_{\lambda'})]=\Sigma_{s=0}^{n-1}\omega^s \dim \bfV^{\lambda'}_{r,s} [\overline{\bbq_{l}}|_{\mo_{\lambda'}}[\dim \mo_{\lambda'}+r](\frac{\dim \mo_{\lambda'}+r}{2}),\gamma_{\lambda'}[r]]$.

Then $[IC(\mo_{\lambda},\overline{\bbq_l}),\alpha_{\lambda}]=\Sigma_{\lambda',r}\Sigma_{s=0}^{n-1}\omega^s \dim \bfV^{\lambda'}_{r,s} [C_{\lambda'}[r](\frac{r}{2}),\psi_{\lambda'}[r]]$ and is suitable with trace map of Frobenius functor  $(a\circ \mathbf{Fr^r})^*$, which means that for $x\in \mo_{\lambda'}\subset \overline{\mo_{\lambda}}$ and fixed under $Fr^r$ and $aFr^r$, $$\Sigma_{i\in \bbZ}(-1)^i tr(\mathbf{Fr^r}^*(\alpha_{\lambda})\circ \xi_{\lambda}, \bfH_x^i(IC(\mo_{\lambda},\overline{\bbq_l})))=\sum_{r\in \bbZ}\sum_{s=0}^{n-1}\omega^s \dim \bfV^{\lambda'}_{r,s} (-\sqrt{q})^{(-r-\dim \mo_{\lambda'})}$$
And for $\mo_{\lambda'}\not\subset \overline{\mo_{\lambda}}$, $\bfV_{r,s}^{\lambda'}=0$ and $\dim \bfV^{\lambda}_{0,0}=1$. $\bfV^{\lambda}_{s,r}=0$ if $s$ or $r$ is not zero. 
\end{proof}
\begin{corollary}
    For $\lambda', \lambda\in \Lambda_{\bfV}$, we have the following properties for $p_{\lambda,\lambda'}(v)$,

    1, $p_{\lambda, \lambda'}(v) = 0$ if $\lambda'\nleqslant \lambda$.

    2, $p_{\lambda, \lambda'}(v)\in v\bbZ[\omega,v]$ if $\lambda'\not=\lambda$, and $p_{\lambda, \lambda} = 1.$

    3, $p_{\lambda, \lambda'}(v) \in v^{\dim\mo_{\lambda'}-\dim\mo_{\lambda}}\mathbb{Z}[\omega,v^2].$

    Thus the $\mathbb{Z}[\omega,\sqrt{q},\sqrt{q}^{-1}]$ free module generated by $[IC(\mathcal{O}_{\lambda}, \overline{\mathbb{Q}}_l), \alpha_{\lambda}]$ is the same as $\mathbb{Z}[\omega,\sqrt{q},\sqrt{q}^{-1}]$ free module generated by $[C_{\lambda'}, \psi_{\lambda'}]$ in $\mk_{\bfV}^r$.
\end{corollary}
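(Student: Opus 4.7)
The plan is to extract the three claimed properties term-by-term from the formula
$$p_{\lambda,\lambda'}(v) = \sum_{i \in \mathbb{Z},\, s = 1,\ldots,n} v^i\, \omega^s\, \dim \mathbf{V}^{\lambda'}_{i,s}$$
supplied by Theorem~\ref{theorem}, recalling that $\mathbf{V}^{\lambda'}_{k,s}$ is the $\omega^s$-eigenspace of $\alpha_\lambda$ inside $\mathbf{H}^{-k-\dim\mo_{\lambda'}}_x IC(\mo_\lambda, \overline{\mathbb{Q}}_l)$ for $x \in \mo_{\lambda'}^{aFr^r} \cap \mo_{\lambda'}^{Fr^r}$. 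Each property falls out of the correspondingly-numbered part of Lemma~\ref{lemma 2}, and the module identity is then a routine unitriangularity argument.

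Property~1 is immediate from the support of $IC(\mo_\lambda, \overline{\mathbb{Q}}_l)$: when $\lambda' \nleq \lambda$ the orbit $\mo_{\lambda'}$ sits outside $\overline{\mo_\lambda}$, so every stalk, hence every eigenspace $\mathbf{V}^{\lambda'}_{i,s}$, vanishes. For property~2 with $\lambda' \neq \lambda$, I will use the strict bound $j_{\lambda'}^* IC(\mo_\lambda,\overline{\mathbb{Q}}_l) \in \mathcal{D}_c^{\leq -\dim\mo_{\lambda'}-1}$ established inside the proof of Lemma~\ref{lemma 2}(2); this forces the cohomological degree $i' \leq -\dim\mo_{\lambda'}-1$ on nonzero stalks, and after reindexing $k = -i'-\dim\mo_{\lambda'}$ every surviving monomial has $v$-exponent $\geq 1$. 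The diagonal case $\lambda'=\lambda$ follows from Lemma~\ref{lemma 2}(3), which isolates a single one-dimensional cohomology group in degree $-\dim\mo_\lambda$, together with the normalization $\alpha_\lambda|_{\mo_\lambda} = \mathrm{id}$ (so the lone eigenvalue is $1$), yielding $p_{\lambda,\lambda}=1$.

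For property~3, Lemma~\ref{lemma 2}(1) enforces $i' \equiv \dim\mo_\lambda \pmod 2$ on every non-vanishing stalk degree. Rewriting via $k = -i' - \dim\mo_{\lambda'}$, every surviving exponent in $p_{\lambda,\lambda'}$ satisfies $k \equiv \dim\mo_{\lambda'} - \dim\mo_\lambda \pmod 2$. Combined with the lower bound from property~2 (and the trivial $k=0$ contribution in the diagonal case), we may factor $p_{\lambda,\lambda'}(v) = v^{\dim\mo_{\lambda'}-\dim\mo_\lambda} f(v^2)$ with $f \in \mathbb{Z}[\omega,v^2]$, which is the claim.

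The module equality is the cleanup: properties~1 and~2 say that the transition matrix $\bigl(p_{\lambda,\lambda'}(-\sqrt{q}^{-1})\bigr)_{\lambda,\lambda' \in \Lambda_\mathbf{V}}$ is upper unitriangular for the partial order on $\Lambda_\mathbf{V}$ (ones on the diagonal, nonzero off-diagonal entries only when $\lambda' < \lambda$), hence invertible over $\mathbb{Z}[\omega,\sqrt{q},\sqrt{q}^{-1}]$. Combined with Lemma~\ref{lemma 1}, this forces the two $\mathbb{Z}[\omega,\sqrt{q},\sqrt{q}^{-1}]$-submodules of $\mk_\mathbf{V}^r$ to coincide. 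I do not anticipate any genuine obstacle; the only care needed is to keep the indexing $k$ versus $i'$ straight and to rely on the sharper support bound that the proof of Lemma~\ref{lemma 2}(2) actually establishes rather than the slightly weaker inequality printed in its statement.
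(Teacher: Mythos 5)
Your proposal is correct and follows the same route as the paper's own proof: read off each property of $p_{\lambda,\lambda'}$ from the corresponding part of Lemma~\ref{lemma 2}, reindex $r=-i-\dim\mathcal{O}_{\lambda'}$, and close with the unitriangularity argument for the module identity. You are also right that the strict-inequality version of the support bound, $j_{\lambda'}^*IC\in\mathcal{D}_c^{\le -\dim\mathcal{O}_{\lambda'}-1}$ from the proof of Lemma~\ref{lemma 2}(2), is what is actually needed to get $\mathbf{V}^{\lambda'}_{r,s}=0$ for $r\le 0$ (and hence $p_{\lambda,\lambda'}\in v\,\mathbb{Z}[\omega,v]$ for $\lambda'\ne\lambda$); the paper's printed statement of Lemma~\ref{lemma 2}(2) is off by one, and its corollary proof silently invokes the sharper bound, exactly as you noticed.
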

\begin{proof}
    By Lemma~\ref{lemma 2}, we have that $\bfH^i_x(IC(\mo_{\lambda},\overline{\bbq_l}))=0$ if $\lambda'\nleqslant \lambda$, $\bfH^i_x(IC(\mo_{\lambda},\overline{\bbq_l}))=0 $ when $i>-\dim\mo_{\lambda'}, \lambda'\not=\lambda$, thus $\bfV_{-i-\dim \mo_{\lambda'},s}=0$ when $i>-\dim\mo_{\lambda'}$, thus $\bfV_{r,s}=0$ when $r\leq 0$. And $\bfH^i_x(IC(\mo_{\lambda},\overline{\bbq_l}))=0$ if $i+\dim\mo_{\lambda}$ is odd, then $\bfV_{r,s}=0$ if $r+\dim \mo_{\lambda'}-\dim \mo_{\lambda}$ is odd. So we have the proof.
\end{proof}

We let matrix $\mathbf{P}_{\bfV}=(p_{\lambda,\lambda'}(v))_{\lambda,\lambda'\in \Lambda_{\bfV}}$. From last corollary and theorem \ref{theorem}, $\mathbf{P}_{\bfV}$ is an upper triangular matrix with all diagonal elements $1$ and $p_{\lambda,\lambda'}(v)\in \bbZ[v,v^{-1},\omega]$. Thus we could calculate that its inverse matrix $\mathbf{Q}_{\bfV}=(q_{\lambda,\lambda'}(v))_{\lambda,\lambda'}$ is also an upper triangular matrix with all diagonal elements $1$ and $q_{\lambda,\lambda'}(v)\in \bbZ[v,v^{-1},\omega]$. Specificly, in $\mk^r_{\bfV}$ for $\lambda\in \Lambda_{\bfV}$, $[C_{\lambda},\psi_{\lambda}]=\sum_{\lambda'\in \Lambda_{\bfV}}q_{\lambda,\lambda'}(-\sqrt{q}^{-1})[IC(\mo_{\lambda'},\overline{\bbq_l}),\alpha_{\lambda'}]$.

By Theorem~\ref{theorem}, the modified Grothendieck group is linearly isomorphic to the function space corresponding to its image under the trace map.

\begin{definition}
    For finite dimensional hereditary algebra $A$ over field $\bbF_q$, and $A-$module $M,N,L$, denote $g_{MN}^{L}(q)$ the number of submodules $L'\subset L$ satisfying $L/L'\cong M, L'\cong N$.
\end{definition}
\begin{definition}
$aFr^r$-stable representation category of $Q$ is denoted as $\operatorname{rep}^{aFr^r}(Q)$. More precisely, its objects are triples like $(V, x, F_V)$, where $(V, x) \in \rep_k(Q)$ and $aFr^r_V: V \rightarrow V$ is Frobenius map on $\overline{\bbF_q}$-space $V$ 
 satisfying:  

{\rm{(a)}} For any $i \in I$, $aFr^r_V(V_i) = V_{a(i)}$;

{\rm{(b)}} For any $h \in H$, $aFr^r_V x_h = x_{a(h)} aFr^r_V: V_{s(h)} \rightarrow V_{t(a(h))}$.  

Morphism $(V, x, aFr^r_V) \rightarrow (V', x', aFr^r_{V'})$ is morphism $(f_i)_{i \in I}: (V, x) \rightarrow (V', x')$ in $\rep_k(Q)$, satisfying for any $i \in I$, $f_{a(i)} aFr^r_V = aFr^r_{V'} f_i$.
\end{definition}

For the pair $(C_{\lambda}, \psi_{\lambda})$, its trace map $\chi^a_{(C_{\lambda}, \psi_{\lambda})}$ of Frobenuis functor $\mathbf{a}^*\circ \mathbf{Fr^r}^*$ defines a character function up to $-\sqrt{q}^{-1}$ times on the $\bfG_{\bfV}^{aFr^r}$-orbit of an $aFr^r$-stable representation in $\bfE_{\bfV}$.

We denote by $A$ the $\bbF_q$-algebra corresponding to the Cartan matrix $C$, with its module category written as $\operatorname{mod}(A)$. The category of $aFr^r$-stable representations of the quiver $Q$ is denoted by $\operatorname{rep}^{aFr^r}(Q)$.

\begin{thm}\cite[Theorem 3.2, Section 9]{deng2006frobenius}\label{22}
    There is equivalence between categories,
    $$\operatorname{rep}^{aFr^r}(Q)\cong \operatorname{mod}(A)$$.
\end{thm}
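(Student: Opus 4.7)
The plan is to produce a quasi-inverse pair of functors between the two categories by combining the fixed-point construction with Galois descent over the finite field $\mathbb{F}_q$. Because Theorem~\ref{theorem} and the subsequent discussion already identify the relevant trace maps with characteristic functions on $\mathbf{G}_{\mathbf{V}}^{aFr^r}$-orbits of $aFr^r$-stable representations, the categorical equivalence asserted here is exactly what is needed to match those classes with $A$-modules.

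First I would build $F:\operatorname{rep}^{aFr^r}(Q)\to\operatorname{mod}(A)$ by sending $(V,x,aFr^r_V)$ to the $aFr^r$-fixed $\mathbb{F}_q$-subspace $V^{aFr^r}$. Using the admissibility conditions on $a$, on each $a$-orbit $[i]$ of length $s_i$ the map $(aFr^r)^{s_i}$ restricts to $Fr^{rs_i}$, so $\bigl(\bigoplus_{j\in [i]}V_j\bigr)^{aFr^r}$ is canonically an $\mathbb{F}_{q^{s_i}}$-vector space. The structure maps $x_h$ restricted to each $a$-orbit $[h]$ of arrows from $[i]$ to $[j]$ descend to an $(\mathbb{F}_{q^{s_j}},\mathbb{F}_{q^{s_i}})$-bimodule map, which is exactly the data of a module over the modulated/species algebra $A$ associated to $C=DB$.

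Second I would construct the quasi-inverse $G:\operatorname{mod}(A)\to\operatorname{rep}^{aFr^r}(Q)$ by base change. Given an $A$-module $M$ with orbit components $M_{[i]}$ over $\mathbb{F}_{q^{s_i}}$, set $V_j:=M_{[i]}\otimes_{\mathbb{F}_{q^{s_i}}}\overline{\mathbb{F}_q}$ for $j\in[i]$ and let $aFr^r_V$ act via $1\otimes Fr^r$ combined with the cyclic permutation of the orbit components. The linear maps $x_h$ are obtained by picking, for each orbit of arrows, a representative in $\Omega$ and distributing the bimodule structure over the orbit. Galois descent (Lang's theorem, equivalently the triviality of $H^1$ of the cyclic group generated by $aFr^r$ acting on $GL_n(\overline{\mathbb{F}_q})$) then yields a natural isomorphism $V^{aFr^r}\otimes_{\mathbb{F}_q}\overline{\mathbb{F}_q}\cong V$ for every object, giving $F\circ G\cong \operatorname{id}$ and $G\circ F\cong \operatorname{id}$; full faithfulness on morphisms is the special case of this descent for $\Hom$-spaces.

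The hard part, in my view, is not the categorical formalism but the combinatorial compatibility between the algebra $A$ and the orbit data of $(Q,a)$. One must check that the bimodule attached by $F$ to each orbit of arrows has the prescribed dimension $c_{ij}s_i=c_{ji}s_j$ and that the defining relations of $A$ coming from the symmetrizer $D$ and the matrix $B$ are automatically satisfied on fixed-point spaces. Condition~(b) of the admissibility of $a$ (source and target of each arrow lying in different $a$-orbits) is crucial here: it ensures the bimodule action is unambiguous and that the folded quiver has no loops, which is what makes the species presentation of $A$ apply. Once this bookkeeping is in place, the equivalence follows cleanly and matches the description of trace functions used in the preceding sections.
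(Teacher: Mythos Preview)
The paper does not supply its own proof of this statement: it is quoted verbatim as a result of Deng--Du \cite{deng2006frobenius} and used as a black box to identify $aFr^r$-stable representations with $A$-modules. So there is no in-paper argument to compare your proposal against.

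That said, your outline is essentially the Deng--Du argument: taking $aFr^r$-fixed points to pass from $\overline{\mathbb{F}_q}$-representations to $\mathbb{F}_q$-structures on the orbit spaces, and using Lang's theorem for the quasi-inverse via base change. The one place you should be more careful is the claim that the fixed-point space on an $a$-orbit of length $s_i$ is naturally an $\mathbb{F}_{q^{s_i}}$-module and that the arrow data assemble into the correct bimodule of rank $c_{ij}s_i$; this requires an explicit identification of $\bigl(\bigoplus_{j\in[i]}V_j\bigr)^{aFr^r}$ with a single component $V_{i_0}^{Fr^{rs_i}}$ via projection, and a check that the induced $\mathbb{F}_{q^{s_i}}$-action coming from the Frobenius twist matches the species datum for $A$. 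Deng--Du handle this by working with the category of $F$-stable modules over the path algebra $\mathbf{k}Q$ (their $F$ is your $aFr^r$) and invoking their general fixed-point equivalence for algebras with Frobenius morphism; you are sketching the same mechanism directly on representation spaces, which is fine but would need those identifications spelled out to be a complete proof.
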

Thus, we can always index the character function of a $\bfG_{\bfV}^{aFr^r}$-orbit of an $aFr^r$-stable representation by its corresponding module in $\operatorname{mod}(A)$.

By \cite{Fang2025}[Lemma 4.4],\cite{deng2006frobenius}[ Theorem 6.5], we know that for any finite dimensional hereditary algebra of finite type over field $\bbF_q$, there exists quiver with automorphism $Q=(I,H,s,t,a)$ of finite type, such that there is a bijection of the set of isomorphism class of $\mod A$ and the set of $\mo^{aFr^r}$ where $\mo$ is $\bfG_{\bfV}-$orbit in $\bfE_{\bfV}$ for some $\bfV$. Thus if we denote $\mo_{\lambda}^{aFr^r},\mo_{\lambda'}^{aFr^r},\mo_{\lambda''}^{aFr^r}$ as the corresponding elements of $A-$modules $L,M,N$, we could also denote $g_{MN}^{L}$ as $g_{\lambda', \lambda''}^{\lambda}$.

Let $X^{aFr^r}$ and $G^{aFr^r}$ be the fixed point sets of $X$ and $G$ under their Frobenius maps ${aFr^r}$ respectively, we denote by $\tilde{\cH}_{G^{aFr^r}}(X^{aFr^r})$ the $\bbC$-vector space of $G^{aFr^r}$-invariant functions on $X^{aFr^r}$. For any $G$-equivariant morphism $f:X\rightarrow Y$ which respects the $\bbF_q$-structures, its restriction to $X^{aFr^r}$ is still denoted by $f:X^{aFr^r}\rightarrow Y^{aFr^r}$, and there are $\bbC$-linear maps
\begin{align*}
f^*:\tilde{\cH}_{G^{aFr^r}}(Y^{aFr^r})&\rightarrow \tilde{\cH}_{G^{aFr^r}}(X^{aFr^r}) &f_!: \tilde{\cH}_{G^{aFr^r}}(X^{aFr^r})&\rightarrow \tilde{\cH}_{G^{aFr^r}}(Y^{aFr^r})\\
\varphi&\mapsto (x\mapsto f(\varphi(x))), & \psi&\mapsto(y\mapsto \sum_{x\in f^{-1}(y)}\psi(x)).
\end{align*}

\begin{proposition}\cite{Pramod-2021}[Theorem 5.3.13]
    We have the following equations, $\chi^a(f^*F)=f^*\chi^a(F)$, $\chi^a(f_!G)=f_!(\chi^a(G))$, for $F\in \widetilde{\cD_{m,r,G}^b(Y)},G\in \widetilde{\cD_{m,r,G}^b(X)}$.
\end{proposition}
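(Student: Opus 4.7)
The strategy is to reduce both identities to the classical Grothendieck--Lefschetz trace formula, namely \cite[Theorem 5.3.13]{Pramod-2021}, by reinterpreting $\chi^a$ as the ordinary trace function associated with the Frobenius endomorphism $aFr^r$. Since $a$ is an admissible automorphism of finite order and, in the Lusztig setup, is defined over the prime field $\mathbb{F}_p \subset \mathbb{F}_q$, it commutes with $Fr^r$; hence $aFr^r$ is itself a Frobenius morphism attached to a twisted $\mathbb{F}_q$-structure on $X$ (resp.\ $Y$). Any $G$-equivariant morphism $f : X \to Y$ that respects the original $\mathbb{F}_q$- and $a$-structures automatically respects this twisted structure.

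First I would verify that, for $((A,F),\phi_A) \in \widetilde{\mathcal{D}^b_{m,r,G}(X)}$, the composition $F \circ \mathbf{Fr^r}^*(\phi_A) : (aFr^r)^* A \to A$ is an honest Weil structure for $aFr^r$, via the canonical identification $(aFr^r)^* = \mathbf{Fr^r}^*\mathbf{a}^*$. By construction $\chi^a_{((A,F),\phi_A)}$ coincides pointwise on $X^{aFr^r}$ with the classical trace function of $A$ equipped with this composite Weil structure. Next I would check that the formation of this composite Weil structure commutes with $f^*$ and $f_!$: for $f^*$ this is immediate from the naturality of $\mathbf{Fr^r}^*$ and $\mathbf{a}^*$; for $f_!$ it follows from the base-change isomorphisms $\mathbf{Fr^r}^* f_! \cong f_! \mathbf{Fr^r}^*$ and $\mathbf{a}^* f_! \cong f_! \mathbf{a}^*$ applied to the given Weil data.

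With these compatibilities in place, the pullback identity $\chi^a(f^*F) = f^*\chi^a(F)$ is immediate from the stalkwise definition of the trace: for $x \in X^{aFr^r}$ one has $f(x) \in Y^{aFr^r}$, and the induced endomorphism on the stalk $(f^*F)_x$ agrees with the one on $F_{f(x)}$ under the canonical identification, so the alternating sums of their traces coincide. The pushforward identity $\chi^a(f_!G) = f_!\chi^a(G)$ is then the Grothendieck--Lefschetz trace formula for the Frobenius $aFr^r$, which is exactly \cite[Theorem 5.3.13]{Pramod-2021} applied verbatim with $Fr^r$ replaced by $aFr^r$ and with the composite Weil structure $F \circ \mathbf{Fr^r}^*(\phi_A)$ in place of the original $F$.

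The main obstacle I anticipate is the bookkeeping for the interplay between $\mathbf{Fr^r}^*$, $\mathbf{a}^*$, and the six-functor formalism---specifically, confirming that the canonical commutation isomorphism $\mathbf{Fr^r}^* \mathbf{a}^* \cong \mathbf{a}^* \mathbf{Fr^r}^*$ is coherent with the base-change isomorphisms invoked above, so that no spurious sign, shift, or Tate twist is introduced when transporting the Weil data through $f^*$ or $f_!$. Once this coherence is made explicit, both equalities follow directly from the classical case.
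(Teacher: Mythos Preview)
The paper does not give a proof of this proposition: it is stated as a direct citation of \cite[Theorem 5.3.13]{Pramod-2021}, with no further argument. Your proposal correctly unpacks why that citation suffices---namely, because the paper has already declared $aFr^r$ to be a genuine Frobenius map and defined $\chi^a$ as the ordinary trace map for the Weil structure $\mathbf{Fr^r}^*(\phi_A)\circ F$ with respect to $aFr^r$, so the classical result applies verbatim. Your elaboration is sound and is exactly the implicit reasoning behind the paper's bare citation.
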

Then by calculation, $$\chi^a(\Ind_{\nu',\nu''}^{\nu}((C_{\lambda'},\psi_{\lambda'})\boxtimes (C_{\lambda''},\psi_{\lambda''})))=(-\sqrt{q})^{\sum_{i\in I}\nu_i'\nu_i''+\sum_{h\in H}\nu_{s(h)}'\nu_{t(h)}''-\dim\mo_{\lambda'}-\dim\mo_{\lambda''}}\sum_{\lambda\in \Lambda_{\bfV}}g_{\lambda',\lambda''}^{\lambda}(q)1_{\mo^{aFr^r}_{\lambda}}.$$

It is obvious that the rightside is equal to$$(-\sqrt{q})^{\sum_{i\in I}\nu_i'\nu_i''+\sum_{h\in H}\nu_{s(h)}'\nu_{t(h)}''-\dim\mo_{\lambda'}-\dim\mo_{\lambda''}}\sum_{\lambda\in \Lambda_{\bfV}}g_{\lambda',\lambda''}^{\lambda}(q)(-\sqrt{q})^{\dim\mo_{\lambda}}\chi^a(C_{\lambda},\psi_{\lambda}).$$

From now on, we denote $[(IC(\mo_{\lambda},\overline{\bbq_l}),\alpha_{\lambda})]$ and the one with Weil structure $\xi_{\lambda}$ both $b_{\lambda}$, for we could distinguish them with the Grothendieck group they belong to. By the Proposition\ref{prop L}, in $\tilde{\mk}$, we have $$b_{\lambda}\cdot b_{\lambda'}=\sum_{\lambda}h_{\lambda',\lambda''}^{\lambda}(v)b_{\lambda},$$ where $h_{\lambda',\lambda''}^{\lambda}(v)\in \bbZ[v,v^{-1},\omega]$. And for when we choose $v=-\sqrt{q}^{-1}$, and Weil structure $\xi_{\lambda}$ on $IC(\mo_{\lambda},\overline{\bbq_l})$, By \cite{lusztig1998canonical}[Theorem 6.2] the equation above remains true in modified grothendieck group $\mk^r=\oplus_{\bfV}\mk^r_{\bfV}$ when the multiplication on $\mk^r$ is induced by the induction functor $\Ind$. By acting trace map $\chi^a$, we have that $\chi^a(\Ind_{\nu',\nu''}^{\nu}b_{\lambda'}\boxtimes b_{\lambda''})=\sum_{\lambda\in \Lambda_{\bfV}}h_{\lambda',\lambda''}^{\lambda}(-\sqrt{q}^{-1})\chi^a(b_{\lambda})$

\begin{corollary}\label{2}
   There exists $f_{\lambda',\lambda''}^{\lambda}(v)\in \bbZ[v^{-2}]$, such that for any $q=p^r$, $f_{\lambda',\lambda''}^{\lambda}(-\sqrt{q}^{-1})=g_{\lambda',\lambda''}^{\lambda}(q)$.
\end{corollary}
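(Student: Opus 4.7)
The plan is to compute the trace $\chi^a(\Ind((C_{\lambda'},\psi_{\lambda'})\boxtimes(C_{\lambda''},\psi_{\lambda''})))$ in two different ways and compare them. On one side, the explicit formula stated in the paragraph immediately preceding the corollary gives
$$\chi^a(\Ind((C_{\lambda'},\psi_{\lambda'})\boxtimes(C_{\lambda''},\psi_{\lambda''})))=(-\sqrt{q})^{c_{\lambda',\lambda''}}\sum_{\lambda}g^{\lambda}_{\lambda',\lambda''}(q)(-\sqrt{q})^{\dim\mo_\lambda}\chi^a(C_\lambda,\psi_\lambda),$$
with $c_{\lambda',\lambda''}=\sum_{i}\nu'_i\nu''_i+\sum_{h}\nu'_{s(h)}\nu''_{t(h)}-\dim\mo_{\lambda'}-\dim\mo_{\lambda''}$. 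Since the functions $\chi^a(C_\lambda,\psi_\lambda)$ are supported on the distinct orbits $\mo_\lambda^{aFr^r}$, they are linearly independent in $\tilde{\cH}_{G_\bfV^{aFr^r}}(\bfE_{\bfV,\Omega}^{aFr^r})$, so any other expression of the same trace as a linear combination of $\chi^a(C_\lambda,\psi_\lambda)$ will force the desired formula coefficient-by-coefficient.

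For the second computation, I will pass through the canonical basis. Using the base changes produced by Theorem~\ref{theorem} and its corollary, $b_{\mu}=\sum_{\nu}p_{\mu,\nu}(v)[C_\nu,\psi_\nu]$ with inverse $[C_\mu,\psi_\mu]=\sum_{\nu}q_{\mu,\nu}(v)b_\nu$ where $\mathbf{P}_\bfV$ and $\mathbf{Q}_\bfV$ are upper triangular with entries in $\bbZ[v,v^{-1},\omega]$, combined with Lusztig's multiplication in $\tilde{\mk}$ (lifted to $\mk^r$ at $v=-\sqrt{q}^{-1}$ by \cite{lusztig1998canonical}, Theorem~6.2), $b_{\nu'}\cdot b_{\nu''}=\sum_\nu h^{\nu}_{\nu',\nu''}(v)b_\nu$ with $h^{\nu}_{\nu',\nu''}(v)\in\bbZ[v,v^{-1},\omega]$, I expand
$$[C_{\lambda'},\psi_{\lambda'}]\cdot[C_{\lambda''},\psi_{\lambda''}]=\sum_{\lambda}F^{\lambda}_{\lambda',\lambda''}(v)[C_\lambda,\psi_\lambda],$$
where
$$F^{\lambda}_{\lambda',\lambda''}(v)=\sum_{\nu',\nu'',\mu}q_{\lambda',\nu'}(v)\,q_{\lambda'',\nu''}(v)\,h^{\mu}_{\nu',\nu''}(v)\,p_{\mu,\lambda}(v)\in\bbZ[v,v^{-1},\omega].$$
Applying $\chi^a$ and matching coefficients against the first expression then yields $g^{\lambda}_{\lambda',\lambda''}(q)=v^{c_{\lambda',\lambda''}+\dim\mo_\lambda}F^{\lambda}_{\lambda',\lambda''}(v)$ at $v=-\sqrt{q}^{-1}$, so that the natural candidate is $f^{\lambda}_{\lambda',\lambda''}(v):=v^{c_{\lambda',\lambda''}+\dim\mo_\lambda}F^{\lambda}_{\lambda',\lambda''}(v)\in\bbZ[v,v^{-1},\omega]$.

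The main obstacle is to upgrade the a priori ring of coefficients from $\bbZ[v,v^{-1},\omega]$ to $\bbZ[v^{-2}]$. I expect to handle this in two stages. First, since $g^{\lambda}_{\lambda',\lambda''}(q)\in\bbZ$ for every prime power $q=p^r$, writing $f^{\lambda}_{\lambda',\lambda''}(v)=\sum_{k,s}a_{k,s}v^k\omega^s$ and evaluating at $v=-\sqrt{q}^{-1}$ gives $\sum_{k,s}a_{k,s}(-1)^k q^{-k/2}\omega^s\in\bbZ$ for infinitely many $q$; this rigidity forces the odd-$k$ contributions to vanish and each $\sum_s a_{k,s}\omega^s$ (for even $k$) to be a rational integer, so $f^{\lambda}_{\lambda',\lambda''}(v)\in\bbZ[v^2,v^{-2}]$. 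Second, the parity assertions already established in the corollary to Theorem~\ref{theorem}, namely $p_{\lambda,\mu}(v)\in v^{\dim\mo_\mu-\dim\mo_\lambda}\bbZ[v^2,\omega]$ (and the analogous property for $q_{\lambda,\mu}(v)$), together with the analogous parity of $h^{\mu}_{\nu',\nu''}(v)$ coming from Lusztig's grading, pin down the power-of-$v$ shift so that only non-positive powers of $v^2$ survive; equivalently, only non-negative powers of $q$ appear. Combining both stages yields $f^{\lambda}_{\lambda',\lambda''}(v)\in\bbZ[v^{-2}]$, which is precisely the Hall polynomial asserted by Ringel.
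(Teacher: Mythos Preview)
Your proposal is correct and follows essentially the same route as the paper: compute $\chi^a\bigl(\Ind((C_{\lambda'},\psi_{\lambda'})\boxtimes(C_{\lambda''},\psi_{\lambda''}))\bigr)$ once via the explicit Hall-number formula and once via the base changes $[C,\psi]\leftrightarrow b$ together with Lusztig's structure constants $h^{\lambda}_{\lambda',\lambda''}(v)$, then match coefficients and invoke $g^{\lambda}_{\lambda',\lambda''}(q)\in\bbZ$ for all $q=p^r$ to force the resulting Laurent polynomial into $\bbZ[v^{-2}]$. The paper's proof is in fact terser at the last step (it simply says ``by the properties of $q,\,h,\,p$ and $g\in\bbZ$''), so your more explicit two-stage rigidity argument is a reasonable elaboration; the only caveat is that your appeal to a parity property of $h^{\mu}_{\nu',\nu''}(v)$ is not established in the paper and is not needed---the integrality of $g$ at infinitely many $q$ already suffices.
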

\begin{proof}
   For we already have that $$\begin{aligned}
       \chi^a(\Ind_{\nu',\nu''}^{\nu}((C_{\lambda'},\psi_{\lambda'})\boxtimes (C_{\lambda''},\psi_{\lambda''})))=&(-\sqrt{q})^{\sum_{i\in I}\nu_i'\nu_i''+\sum_{h\in H}\nu_{s(h)}'\nu_{t(h)}''-\dim\mo_{\lambda'}-\dim\mo_{\lambda''}}\\&\sum_{\lambda\in \Lambda_{\bfV}}g_{\lambda',\lambda''}^{\lambda}(q)(-\sqrt{q})^{\dim\mo_{\lambda}}\chi^a(C_{\lambda},\psi_{\lambda}).\end{aligned}$$
    And 
    $[C_{\lambda},\psi_{\lambda}]=\sum_{\lambda'\in \Lambda_{\bfV}}q_{\lambda,\lambda'}(-\sqrt{q}^{-1})b_{\lambda'}.$
    
    Thus
    $$\begin{aligned}   
    &\chi^a(\Ind_{\nu',\nu''}^{\nu}((C_{\lambda'},\psi_{\lambda'})\boxtimes (C_{\lambda''},\psi_{\lambda''})))=\sum_{\lambda_1\in \Lambda_{\bfV'},\lambda_2\in \Lambda_{\bfV''}}q_{\lambda',\lambda_1}(-\sqrt{q}^{-1})q_{\lambda'',\lambda_2}(-\sqrt{q}^{-1})\chi^a(\Ind_{\nu',\nu''}^{\nu}b_{\lambda_1}\boxtimes b_{\lambda_2})\\=&\sum_{\lambda_1\in \Lambda_{\bfV'},\lambda_2\in \Lambda_{\bfV''},\lambda_3\in \Lambda_{\bfV}}q_{\lambda',\lambda_1}(-\sqrt{q}^{-1})q_{\lambda'',\lambda_2}(-\sqrt{q}^{-1})h_{\lambda_1,\lambda_2}^{\lambda_3}(-\sqrt{q}^{-1})\chi^a(b_{\lambda_3})\\=&\sum_{\lambda_1\in \Lambda_{\bfV'},\lambda_2\in \Lambda_{\bfV''},\lambda_3,\lambda\in \Lambda_{\bfV}, }q_{\lambda',\lambda_1}(-\sqrt{q}^{-1})q_{\lambda'',\lambda_2}(-\sqrt{q}^{-1})h_{\lambda_1,\lambda_2}^{\lambda_3}(-\sqrt{q}^{-1})p_{\lambda_3,\lambda}(-\sqrt{q}^{-1})\chi^a((C_{\lambda},\psi_{\lambda})).&\end{aligned}$$

   Thus 
   $$\begin{aligned}      
  &(-\sqrt{q})^{\sum_{i\in I}\nu_i'\nu_i''+\sum_{h\in H}\nu_{s(h)}'\nu_{t(h)}''-\dim\mo_{\lambda'}-\dim\mo_{\lambda''}}g_{\lambda',\lambda''}^{\lambda}(q)(-\sqrt{q})^{\dim\mo_{\lambda}}\\=&\sum_{\lambda_1\in \Lambda_{\bfV'},\lambda_2\in \Lambda_{\bfV''},\lambda_3,\in \Lambda_{\bfV}, }q_{\lambda',\lambda_1}(-\sqrt{q}^{-1})q_{\lambda'',\lambda_2}(-\sqrt{q}^{-1})h_{\lambda_1,\lambda_2}^{\lambda_3}(-\sqrt{q}^{-1})p_{\lambda_3,\lambda}(-\sqrt{q}^{-1}) \end{aligned}$$
  By the properties of $q_{\lambda',\lambda_1}(v)$, $q_{\lambda'',\lambda_2}(v)$, $h_{\lambda_1,\lambda_2}^{\lambda_3}(v)$, and $p_{\lambda_3,\lambda}(v)$, as well as $g_{\lambda',\lambda''}^{\lambda}(q)\in \bbZ$, we have the proof.
\end{proof}
\section{Structure coefficients for bar involution on PBW basis}
For the PBW basis, we consider its bar involution, which corresponds to the Verdier duality on the modified Grothendieck group via trace map. This part of work is a slight generalization of Caldero and Reineke's work\cite{2004The}. 

By Theorem \ref{22}, we can always index the character function of a $\bfG_{\bfV}^{aFr^r}$-orbit of an $aFr^r$-stable representation by its corresponding module in $\operatorname{mod}(A)$. For example, we denote the character function associated with the $\bfG_{\bfV}^{aFr^r}$-orbit of an $aFr^r$-stable representation by $1_{\mo^{aFr^r}_M}$, where $M$ is the corresponding module. Here, $\mo^{aFr^r}_M$ denotes the $\bfG_{\bfV}^{aFr^r}$-orbit of a point $x$ such that $(V_M, x, aFr^r)$ corresponds to $M$, and $\nu_M$ is the dimension vector of $V_M$.

Assume $A$ is of finite type. Let $\{I_t\}_{1 \leq t \leq m}$ be the complete set of indecomposable $A$-modules, ordered so that
\[
\Hom_A(I_s, I_t) = 0 \quad \text{for } s > t.
\]

For $M, N \in \operatorname{mod}(A)$, suppose $N = \bigoplus_{t=1}^m N_t$ where each $N_t = I_t^{\oplus r_t}$. Let $F_{N_1, \dots, N_m}^M(q)$ denote the number of filtrations
\[
0 = M_m \subset M_{m-1} \subset \cdots \subset M_1 \subset M_0 = M
\]
satisfying $M_i / M_{i+1} \cong N_{i+1}$ for each $i$. By Corollary~\ref{2}, the function $F_{N_1, \dots, N_m}^M(q)$ is a polynominal. For any $M \in \operatorname{mod}(A)$, let $a_M := |\Aut_A(M)|$.

Now consider the bar involution of $1_{\mo^{aFr^r}_M}$, denoted by $\overline{1_{\mo^{aFr^r}_M}}$. According to \cite[Proposition 3.3]{2004The}, we have the following expression:
\begin{equation} \label{gs}
\overline{1_{\mo^{aFr^r}_M}} = \sum_{N \in \operatorname{mod}(A)} v^{\dim \mo^{aFr^r}_M - \dim \mo^{aFr^r}_N} \, \overline{F_{N_1, \dots, N_m}^M(q) \cdot \frac{\prod_{i=1}^m a_{N_i}}{a_M}} \cdot 1_{\mo^{aFr^r}_N}.
\end{equation}

We define the following varieties:
\begin{itemize}
    \item $\bfE_{\bfV,m}''$ is the variety
    \[
    \left\{ (x, W_1, \dots, W_{m-1}) \,\middle|\, 
    \begin{array}{l}
    x(W_i) \subset W_i, \; W_i \subset W_{i-1}, \\
    \dim W_{i-1} / W_i = \nu_{N_i}, \; x \in \bfE_{\bfV}
    \end{array}
    \right\};
    \]
    
    \item $\bfE_{\bfV,m}'$ is the variety
    \[
    \left\{ (x, W_1, \dots, W_{m-1}, \rho_1, \dots, \rho_m) \,\middle|\,
    \begin{array}{l}
    x(W_i) \subset W_i, \; W_i \subset W_{i-1}, \\
    \dim W_{i-1}/W_i = \nu_{N_i}, \; W_0 = \bfV, \; x \in \bfE_{\bfV}, \\
    \rho_i : W_{i-1}/W_i \xrightarrow{\sim} V_{N_i}
    \end{array}
    \right\}.
    \]
\end{itemize}

Define the maps:
\[
p_1^m(x, W_1, \dots, W_{m-1}, \rho_1, \dots, \rho_m) = \left( \rho_i(x|_{W_{i-1}/W_i}) \rho_i^{-1} \right)_{i=1}^m,
\]
\[
p_2^m(x, W_1, \dots, W_{m-1}, \rho_1, \dots, \rho_m) = (x, W_1, \dots, W_{m-1}),
\]
\[
p_3^m(x, W_1, \dots, W_{m-1}) = x.
\]
See the following graph,
$$\begin{tikzcd}
\bfE_{V_{N_1}}^{a{Fr^r}}\times \cdots \bfE_{V_{N_m}}^{a{Fr}^r} & {{\bfE_{\bfV,m}^{a{Fr}^r}}^{\prime}} \arrow[l, "p_1^m"'] \arrow[r, "p_2^m"] & {{\bfE_{\bfV,m}^{a{Fr}^r}}^{\prime\prime}} \arrow[r, "p_3^m"] & {\bfE_{\bfV,m}^{a{Fr}^r}}
\end{tikzcd}$$

Fix a filtration
\[
0 = \bfW_m \subset \cdots \subset \bfW_1 \subset \bfW_0 = \bfV,
\]
such that $\bfW_i$ is stable under  $a,Fr^r$ for each $i$ and $\bfW_{i-1}/\bfW_i \cong V_{N_i}$ with $a,Fr^r$ induced. We denote by $\bfP_{\nu} \subset \bfG_{\bfV}$ the subgroup preserving this filtration, and by $\bfU_{\nu} \subset \bfP_{\nu}$ its unipotent radical.

It is easy to see that, if we define
\[
F := \left\{ x \in \bfE_{\bfV}^{aFr^r} \,\middle|\, x(\bfW_i) \subset \bfW_i,\; \text{for } i = 1, \dots, m \right\},
\]
then we have the identifications:
\[
{\bfE_{\bfV,m}^{aFr^r}}^{\prime\prime} = F \times_{\bfP_{\nu}^{aFr^r}} \bfG_{\bfV}^{aFr^r}, \quad
{\bfE_{\bfV,m}^{aFr^r}}^{\prime} = F \times_{\bfU_{\nu}^{aFr^r}} \bfG_{\bfV}^{aFr^r}.
\]

Now, for a sequence of $aFr^r$-stable representations $(V_{N_i}, x_i, aFr^r)$, we define the following subsets of $F$:
\begin{itemize}
    \item $Y_N := \left\{ x \in F \,\middle|\, x|_{\bfW_{i-1}/\bfW_i} = x_i,\; \text{for } i = 1, \dots, m \right\}$,
    \item $Y_{\bar{N}} := \left\{ x \in F \,\middle|\, x|_{\bfW_{i-1}/\bfW_i} \in \bfG_{V_{N_i}}^{aFr^r}(x_i),\; \text{for } i = 1, \dots, m \right\}$.
\end{itemize}
\begin{proposition}
    There is an equation,$$F_{N_1,\cdots,N_m}^M(q)=\frac{a_M|Y_N\cap\mo_{M}^{aFr^r}|}{\Pi_{i=1}^ma_{N_i}|\bfU_{\nu}^{aFr^r}|}.$$
\end{proposition}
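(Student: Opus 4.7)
The plan is to count a single auxiliary set in two different ways. Define
$$\mathcal{T} := \{(x,\phi) \mid x \in Y_N \cap \mo_M^{aFr^r},\ \phi: (V_\bfV, x) \xrightarrow{\sim} M \text{ in } \operatorname{mod}(A)\}.$$
First I would project $\mathcal{T}$ onto its $x$-component. Each fibre is a torsor under $\Aut_A(M)$, since $x$ lies in the orbit corresponding to $M$, which yields
$$|\mathcal{T}| = a_M \cdot |Y_N \cap \mo_M^{aFr^r}|.$$

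Second, I would project $\mathcal{T}$ to filtrations of $M$ via $(x,\phi) \mapsto (M_i)_i$ with $M_i := \phi(\bfW_i)$. The condition $x \in Y_N$, namely $x|_{\bfW_{i-1}/\bfW_i} = x_i$, forces $\phi$ to induce an isomorphism $\phi_i : (V_{N_i}, x_i) \xrightarrow{\sim} M_{i-1}/M_i$ of $A$-modules; hence the image lies in the set of filtrations counted by $F_{N_1,\ldots,N_m}^M(q)$. For a fixed filtration $(M_i)_i$, I would enumerate its fibre by first choosing the quotient isomorphisms $\phi_i$ (each a torsor under $\Aut_A(N_i)$, giving $\prod_i a_{N_i}$ choices in total) and then choosing a graded $aFr^r$-equivariant lift $\phi : V_\bfV \to V_M$ compatible with the filtrations $(\bfW_i)$, $(M_i)$ and with the prescribed $\phi_i$ on successive quotients. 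The element $x$ is determined by $x = \phi^{-1} x_M \phi$, so the pair $(x,\phi)$ is fully parametrised in this way.

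The main point to check will be that the set of such lifts $\phi$ is a torsor under $\bfU_\nu^{aFr^r}$, hence of cardinality $|\bfU_\nu^{aFr^r}|$. Simple transitivity is immediate from the description of $\bfU_\nu$ as the stabiliser of the filtration acting trivially on successive quotients: any two lifts $\phi, \phi'$ differ by $\phi^{-1}\phi' \in \bfU_\nu^{aFr^r}$. Non-emptiness uses that $\bfU_\nu$ is smooth and connected as the unipotent radical of a parabolic, so Lang--Steinberg applied to the $aFr^r$-action on $\bfU_\nu$ allows any lift over $\oln{\bbF_q}$ (which exists by linear algebra after choosing splittings of the graded pieces) to be adjusted into an $aFr^r$-fixed lift. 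Combining the two counts gives
$$a_M \cdot |Y_N \cap \mo_M^{aFr^r}| = F_{N_1,\ldots,N_m}^M(q) \cdot \prod_{i=1}^m a_{N_i} \cdot |\bfU_\nu^{aFr^r}|,$$
from which the stated formula follows by rearrangement.
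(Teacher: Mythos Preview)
Your argument is correct, but it is organised differently from the paper's proof. The paper works inside the geometric incidence spaces ${\bfE_{\bfV,m}^{aFr^r}}''$ and ${\bfE_{\bfV,m}^{aFr^r}}'$: it identifies ${\bfE_{\bfV,m}^{aFr^r}}'' \cong \bfG_{\bfV}^{aFr^r}\times_{\bfP_\nu^{aFr^r}} F$ and then rewrites $F_{N_1,\ldots,N_m}^M(q)$ as a ratio of point counts through a chain of equalities, passing from $Y_{\bar N}$ to $Y_N$ and using the standard formulas $|\mo_M^{aFr^r}| = |\bfG_\bfV^{aFr^r}|/a_M$, $|\bfP_\nu^{aFr^r}| = |\bfU_\nu^{aFr^r}|\prod_i|\bfG_{V_{N_i}}^{aFr^r}|$, $|\mo_{N_i}^{aFr^r}| = |\bfG_{V_{N_i}}^{aFr^r}|/a_{N_i}$. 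Your proof instead bypasses the ambient group $\bfG_\bfV$ entirely by a direct double-count of pairs $(x,\phi)$, and it makes the Lang--Steinberg step explicit (in the paper that step is hidden inside the identification of the associated bundle on $aFr^r$-fixed points). The paper's route fits naturally with the induction-functor setup used elsewhere in the paper; your route is more self-contained and avoids introducing the auxiliary varieties and the intermediate set $Y_{\bar N}$.
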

\begin{proof}
    For \begin{align*}
        F_{N_1,\cdots,N_m}^M(q)&=\frac{|(p_1^m)^{-1}(\mo_{M}^{aFr^r})\cap \bfG_{\bfV}^{aFr^r}\times_{\mathbf{P}_{\nu}^{aFr^r}}Y_{\bar{N}}|}{|\mo_{M}^{aFr^r}|}\\&=\frac{|\bfG_{\bfV}^{aFr^r}\times_{\mathbf{P}_{\nu}^{aFr^r}}(\mo_{M}^{aFr^r}\cap Y_{\bar{N}})|}{|\mo_{M}^{aFr^r}|}\\&=\frac{|\bfG_{\bfV}^{aFr^r}|\cdot|\mo_{M}^{aFr^r}\cap Y_{\bar{N}}|}{|\mathbf{P}_{\nu}|\cdot |\mo_{M}^{aFr^r}|}\\&=\frac{|\bfG_{\bfV}^{aFr^r}|\cdot|\mo_{M}^{aFr^r}\cap Y_{N}|\Pi_{i=1}^m|\mo_{N_i}^{aFr^r}|}{|\mathbf{P}_{\nu}^{aFr^r}|\cdot |\mo_{M}^{aFr^r}|}\\&=\frac{|\bfG_{\bfV}^{aFr^r}|\cdot|\mo_{M}^{aFr^r}\cap Y_{N}|\Pi_{i=1}^m|\mo_{N_i}^{aFr^r}|}{|\mathbf{U}_{\nu}^{aFr^r}|\cdot |\mo_{M}^{aFr^r}|\Pi_{i=1}^m|\bfG_{V_{Ni}}^{aFr^r}|}\\&=\frac{a_M|Y_N\cap\mo_{M}^{aFr^r}|}{\Pi_{i=1}^ma_{N_i}|\bfU_{\nu}^{aFr^r}|}.
    \end{align*}
    We have the proof.
\end{proof}
When we fix $N$ with an $aFr^r-$stable representation $(V_N,x,aFr^r)$, denote Lie algebra of $\bfG_{\bfV}$ as $\mathfrak{g}_{\nu}$ and consider the map $\phi$ as in \cite[2.5]{2004The},  $$\begin{tikzcd}
{\phi:\mathfrak{g}_{\nu}} \arrow[r]  & \bfE_{\bfV}                        \\
(g_i)_{i\in I} \arrow[r, maps to] & (g_jx_h-x_hg_i|h:i\rightarrow j)_{h\in \Omega}.
\end{tikzcd}$$
We denote Lie algebra of $\bfU_{\nu}$ as $\mathfrak{u}_{\nu}$, and $\phi(\mathfrak{u}_{\nu})$ as $E_{N}$.

By \cite[Proposition 2.5]{2004The}, and find the $aFr^r-$fixed space, $E_{N}^{aFr^r}$ is a tranversal slice of $\bfU_{\nu}^{aFr^r}$ on $Y_N$.

\begin{corollary}
There is an equation,
$$F_{N_1,\cdots,N_m}^M(q)=\frac{a_M|E_N^{aFr^r}\cap \mo^{aFr^r}_{M}|}{\Pi_{i=1}^ma_{N_i}}.$$
\end{corollary}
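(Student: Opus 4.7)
The plan is to start from the formula established in the previous proposition,
\[
F_{N_1,\cdots,N_m}^M(q)=\frac{a_M\,|Y_N\cap\mo_{M}^{aFr^r}|}{\prod_{i=1}^m a_{N_i}\cdot|\bfU_{\nu}^{aFr^r}|},
\]
and then replace $|Y_N\cap\mo_{M}^{aFr^r}|$ by $|\bfU_\nu^{aFr^r}|\cdot|E_N^{aFr^r}\cap\mo_M^{aFr^r}|$, after which the factors of $|\bfU_\nu^{aFr^r}|$ cancel and the claimed formula drops out. Everything therefore reduces to proving the point count identity
\[
|Y_N\cap \mo_M^{aFr^r}|=|\bfU_\nu^{aFr^r}|\cdot|E_N^{aFr^r}\cap \mo_M^{aFr^r}|.
\]

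To establish this, I would invoke the transversal slice statement already cited in the excerpt: by \cite[Proposition 2.5]{2004The}, $E_N$ is a transversal slice for the $\bfU_\nu$-action on $Y_N$, i.e.\ the action map $\bfU_\nu\times E_N\to Y_N$ is an isomorphism of varieties (after identifying $E_N=\phi(\mathfrak{u}_\nu)$ with the complement of the tangent space to the $\bfU_\nu$-orbit through a chosen basepoint). Since this isomorphism is defined over $\bbF_q$ with respect to the $aFr^r$-structure placed on all the objects in question, taking $aFr^r$-fixed points gives a bijection $\bfU_\nu^{aFr^r}\times E_N^{aFr^r}\xrightarrow{\sim} Y_N^{aFr^r}=Y_N$ (note $Y_N$ is already defined inside $\bfE_\bfV^{aFr^r}$). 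The orbit $\mo_M$ is $\bfG_\bfV$-invariant, hence $\bfU_\nu$-invariant, so $\mo_M^{aFr^r}\cap Y_N$ is a union of free $\bfU_\nu^{aFr^r}$-orbits on $Y_N$, and intersecting with $E_N^{aFr^r}$ picks out exactly one representative of each such orbit. This yields the desired point count.

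Substituting back into the expression from the previous proposition, the factor $|\bfU_\nu^{aFr^r}|$ in numerator and denominator cancels, producing
\[
F_{N_1,\cdots,N_m}^M(q)=\frac{a_M\,|E_N^{aFr^r}\cap \mo_M^{aFr^r}|}{\prod_{i=1}^m a_{N_i}},
\]
which is the claim. The only nontrivial point is the descent of the transversal slice property to $\bbF_q$-rational points; I expect this to be the main obstacle but it is immediate once the transversal slice isomorphism in \cite[Proposition 2.5]{2004The} is known to commute with the $aFr^r$-structures (which it does, since $\phi$, $\mathfrak{u}_\nu$, and the chosen filtration $\bfW_\bullet$ are all $aFr^r$-stable by construction).
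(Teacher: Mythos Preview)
Your proposal is correct and matches the paper's approach exactly. The paper gives no explicit proof of this Corollary; it simply states the transversal slice property (``$E_N^{aFr^r}$ is a transversal slice of $\bfU_\nu^{aFr^r}$ on $Y_N$'') immediately before and lets the reader combine it with the preceding Proposition, which is precisely what you do.
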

Apply this corollary to equation \ref{gs}, we will have the following theorem.
\begin{thm}
There is an equation,
$$\overline{1_{\mo^{aFr^r}_M}}=\sum_{N\in \mod{A}}v^{\dim \mo^{aFr^r}_{M}-\dim \mo^{aFr^r}_{N}}\overline{|E_N^{aFr^r}\cap \mo^{aFr^r}_{M}|}1_{\mo^{aFr^r}_N}.$$
\end{thm}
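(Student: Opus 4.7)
The plan is to obtain the stated formula by direct substitution: equation~(\ref{gs}) already provides an expansion of $\overline{1_{\mo^{aFr^r}_M}}$ in the basis $\{1_{\mo^{aFr^r}_N}\}_{N\in\operatorname{mod}(A)}$, with coefficients involving $F_{N_1,\dots,N_m}^M(q)$ and the automorphism group orders $a_{N_i}$, $a_M$. The preceding corollary rewrites this combinatorial quantity as
\[
F_{N_1,\dots,N_m}^M(q)\cdot\frac{\prod_{i=1}^m a_{N_i}}{a_M} \;=\; |E_N^{aFr^r}\cap\mo^{aFr^r}_M|,
\]
so the coefficient appearing under the bar in (\ref{gs}) is exactly $|E_N^{aFr^r}\cap\mo^{aFr^r}_M|$.

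First I would recall (\ref{gs}) verbatim, noting that the formula of Caldero--Reineke \cite[Proposition 3.3]{2004The} extends unchanged to the $aFr^r$-stable setting because Theorem~\ref{22} gives an equivalence $\operatorname{rep}^{aFr^r}(Q)\cong\operatorname{mod}(A)$ that matches $\bfG_{\bfV}^{aFr^r}$-orbits on $\bfE_{\bfV}^{aFr^r}$ with isomorphism classes of $A$-modules, together with the identifications $a_M=|\Aut_A(M)|=|\mathrm{Stab}_{\bfG_{\bfV}^{aFr^r}}(x_M)|$ used in the counting formulas. Next I would substitute the corollary into the coefficient and simplify
\[
\overline{F_{N_1,\dots,N_m}^M(q)\cdot\frac{\prod_{i=1}^m a_{N_i}}{a_M}} \;=\; \overline{|E_N^{aFr^r}\cap\mo^{aFr^r}_M|},
\]
since by Corollary~\ref{2} the polynomial $F_{N_1,\dots,N_m}^M(q)$ is a polynomial in $q$ (equivalently in $v^{-2}$), and the ratio of automorphism group orders also evaluates to an element of $\bbZ[v,v^{-1}]$ on which the bar involution acts in the expected way.

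Finally I would collect terms to obtain
\[
\overline{1_{\mo^{aFr^r}_M}}=\sum_{N\in\operatorname{mod}(A)}v^{\dim\mo^{aFr^r}_M-\dim\mo^{aFr^r}_N}\,\overline{|E_N^{aFr^r}\cap\mo^{aFr^r}_M|}\,1_{\mo^{aFr^r}_N},
\]
which is the claimed identity. The only mildly delicate point is checking that the equivalence of Theorem~\ref{22} and the transversal slice construction $E_N^{aFr^r}\subset Y_N$ from \cite[Proposition 2.5]{2004The} really do carry over to the symmetrizable setting with the Frobenius $aFr^r$ replacing $Fr^r$; this is exactly what the preceding discussion with the maps $p_i^m$, the parabolic $\bfP_{\nu}^{aFr^r}$ and its unipotent radical $\bfU_{\nu}^{aFr^r}$ was set up to guarantee, so once those identifications are in place the proof reduces to the one-line substitution above.
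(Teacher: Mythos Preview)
Your proposal is correct and follows exactly the paper's own approach: the paper proves the theorem in one line by saying ``Apply this corollary to equation~(\ref{gs}),'' which is precisely the substitution you carry out. Your additional remarks about why the Caldero--Reineke formula and the transversal slice survive the passage to the $aFr^r$-setting are reasonable commentary but go beyond what the paper itself supplies.
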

\begin{corollary}
By Theorem \ref{theorem}, $$\overline{1_{\mo^{aFr^r}_{M_{\lambda}}}}=(-\sqrt{q})^{-\dim\mo_{\lambda}}\sum_{\lambda',\lambda''}q_{\lambda,\lambda'}(-\sqrt{q})p_{\lambda',\lambda''}(-\sqrt{q}^{-1}))(-\sqrt{q})^{\dim\mo_{\lambda''}}1_{\mo^{aFr^r}_{M_{\lambda''}}}$$
\end{corollary}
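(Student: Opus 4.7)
The plan is to transfer Theorem~\ref{theorem} and its inverse through the bar involution to the level of characteristic functions via the trace map $\chi^a$.

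First I would invert Theorem~\ref{theorem}. Because $\mathbf{P}_\bfV=(p_{\lambda,\lambda'}(v))$ is upper unitriangular with unit diagonal, so is its inverse $\mathbf{Q}_\bfV=(q_{\lambda,\lambda'}(v))$, yielding the identity $[C_\lambda,\psi_\lambda]=\sum_{\lambda'}q_{\lambda,\lambda'}(-\sqrt{q}^{-1})\,b_{\lambda'}$ in $\mk^r_\bfV$ already recorded in the corollary immediately preceding this one.

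Next I would apply the bar involution on $\mk^r_\bfV$. Each $b_{\lambda'}=[IC(\mo_{\lambda'},\overline{\bbq_l}),\alpha_{\lambda'}]$ is bar-invariant because the $IC$ complexes are Verdier self-dual up to sign (\cite[Proposition 12.5.2]{lusztig2010introduction}), while bar acts on the formal parameter by $v\mapsto v^{-1}$, equivalently $\sqrt{q}\mapsto\sqrt{q}^{-1}$. Barring the expansion of Step~1 and then re-expanding each resulting $b_{\lambda'}$ in terms of $[C_{\lambda''},\psi_{\lambda''}]$ via Theorem~\ref{theorem} produces the double sum
\[
\overline{[C_\lambda,\psi_\lambda]}\;=\;\sum_{\lambda',\lambda''}q_{\lambda,\lambda'}(-\sqrt{q})\,p_{\lambda',\lambda''}(-\sqrt{q}^{-1})\,[C_{\lambda''},\psi_{\lambda''}]
\]
inside $\mk^r_\bfV$.

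Finally I would descend to characteristic functions. The shift $[\dim\mo_\lambda]$ and Tate twist $(\dim\mo_\lambda/2)$ built into the definition of $C_\lambda$, combined with Proposition~\ref{po1}, give the normalisation identity $\chi^a([C_\lambda,\psi_\lambda])=(-\sqrt{q})^{-\dim\mo_\lambda}\,1_{\mo^{aFr^r}_{M_\lambda}}$, equivalently $1_{\mo^{aFr^r}_{M_\lambda}}=(-\sqrt{q})^{\dim\mo_\lambda}\chi^a([C_\lambda,\psi_\lambda])$. Since the algebraic bar on $\mk^r_\bfV$ intertwines under $\chi^a$ with the bar on characters appearing in \eqref{gs}, applying $\chi^a$ to the displayed equation and converting each $\chi^a([C_{\lambda''},\psi_{\lambda''}])$ back to $1_{\mo^{aFr^r}_{M_{\lambda''}}}$ delivers the stated formula.

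The main obstacle I expect is the consistent bookkeeping of the shift/twist normalisations. The algebraic bar on $\mk^r_\bfV$ fixes the canonical basis $\{b_\lambda\}$ while the bar on functions fixes the indicators $\{1_{\mo^{aFr^r}_M}\}$; reconciling the two under $\chi^a$ is precisely what produces the asymmetric prefactors $(-\sqrt{q})^{-\dim\mo_\lambda}$ and $(-\sqrt{q})^{\dim\mo_{\lambda''}}$ in the claim, and pinning down the signs requires a careful check that the bar specialisation $v\mapsto v^{-1}$ commutes with the $(j_\lambda)_!$-pushforward of the shifted, Tate-twisted constant sheaf. Once this comparison is made rigorous, the corollary is an immediate consequence of Steps~1--3.
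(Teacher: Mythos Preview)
Your proposal is correct and follows essentially the same route as the paper: expand $[C_\lambda,\psi_\lambda]$ in the $b_{\lambda'}$ via the inverse matrix $\mathbf{Q}_\bfV$, use bar-invariance of $b_{\lambda'}$ together with $v\mapsto v^{-1}$ on the coefficients, re-expand through $\mathbf{P}_\bfV$, and finally pass to characteristic functions via the normalisation $1_{\mo^{aFr^r}_{M_\lambda}}=(-\sqrt{q})^{\dim\mo_\lambda}\chi^a([C_\lambda,\psi_\lambda])$. The paper's proof is precisely your Steps~1--3 compressed into a two-line display, with the normalisation absorbed by starting from $\overline{(-\sqrt{q})^{\dim\mo_\lambda}[C_\lambda,\psi_\lambda]}$ rather than $\overline{[C_\lambda,\psi_\lambda]}$; your concern about the bookkeeping of shifts and twists is exactly what that initial factor is handling.
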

\begin{proof}
 $$\begin{aligned}\overline{(-\sqrt{q})^{\dim\mo_{\lambda}}[C_{\lambda},\psi_{\lambda}]}=&(-\sqrt{q})^{-\dim\mo_{\lambda}}\sum_{\lambda'}\overline{q_{\lambda,\lambda'}(-\sqrt{q}^{-1}))}b_{\lambda'}\\=&(-\sqrt{q})^{-\dim\mo_{\lambda}}\sum_{\lambda',\lambda''}q_{\lambda,\lambda'}(-\sqrt{q}))p_{\lambda',\lambda''}(-\sqrt{q}^{-1}))[C_{\lambda''},\psi_{\lambda''}]\end{aligned}$$
\end{proof}
\subsection*{Acknowledgement} 
Y. Lan, Y. Wu and J. Xiao are supported by National Natural Science Foundation of China [Grant No. 12031007]
and National Natural Science Foundation of China [Grant No. 12471030].
\end{spacing}

\bibliography{ref}

\begin{thebibliography}{1}

\bibitem{Pramod-2021}
P.~N. Achar.
\newblock {\em Perverse sheaves and applications to representation theory}, volume 258 of {\em Mathematical Surveys and Monographs}.
\newblock American Mathematical Society, Providence, RI, 2021.

\bibitem{BBD}
A.~A. Be\u{\i}linson, J.~Bernstein, and P.~Deligne.
\newblock Faisceaux pervers.
\newblock In {\em Analysis and topology on singular spaces, {I} ({L}uminy, 1981)}, volume 100 of {\em Ast\'{e}risque}, pages 5--171. Soc. Math. France, Paris, 1982.

\bibitem{2004The}
P.~Caldero and M.~Reineke.
\newblock The bar automorphism in quantum groups and geometry of quiver representations.
\newblock {\em Annales- Institut Fourier}, 56(1):255--267, 2004.

\bibitem{deng2006frobenius}
B.~Deng and J.~Du.
\newblock Frobenius morphisms and representations of algebras.
\newblock {\em Transactions of the American Mathematical Society}, 358(8):3591--3622, 2006.

\bibitem{Fang2025}
J.~Fang, Y.~Lan, and Y.~Wu.
\newblock The parity of lusztig's restriction functor and green's formula for a quiver with automorphism.
\newblock {\em Algebras and Representation Theory}, 28(2):483--508, Apr 2025.

\bibitem{lusztig1990canonical}
G.~Lusztig.
\newblock Canonical bases arising from quantized enveloping algebras.
\newblock {\em Journal of the American Mathematical Society}, 3(2):447--498, 1990.

\bibitem{lusztig2010introduction}
G.~Lusztig.
\newblock {\em Introduction to quantum groups}, volume 110 of {\em Progress in Mathematics}.
\newblock Birkh\"{a}user Boston, Inc., Boston, MA, 1993.

\bibitem{lusztig1998canonical}
G.~Lusztig.
\newblock Canonical bases and hall algebras.
\newblock In {\em Representation theories and algebraic geometry}, pages 365--399. Springer, 1998.

\bibitem{OSNotes}
O.~Schiffmann.
\newblock Lectures on canonical and crystal bases of {H}all algebras.
\newblock In {\em Geometric methods in representation theory. {II}}, volume~24 of {\em S\'{e}min. Congr.}, pages 143--259. Soc. Math. France, Paris, 2012.

\end{thebibliography}
\end{document}